\documentclass[12pt]{article}
\input epsf.tex


\usepackage{amsmath}
\usepackage{amsthm}
\usepackage{amsfonts}
\usepackage{amssymb}
\usepackage{graphicx}
\usepackage{latexsym}

\usepackage{amsmath,amsthm,amsfonts,amssymb}

\usepackage{epsfig}

\usepackage{amssymb}
\usepackage[all]{xy}
\xyoption{poly}
\usepackage{fancyhdr}
\usepackage{wrapfig}



\theoremstyle{plain}
\newtheorem{thm}{Theorem}[section]

\newtheorem{lem}[thm]{Lemma}
\newtheorem{cor}[thm]{Corollary}

\theoremstyle{definition}

\newtheorem{notation}{Notation}
\theoremstyle{remark}



\topmargin 15pt
\advance \topmargin by -\headheight
\advance \topmargin by -\headsep
\textheight 8.6in
\oddsidemargin 0pt
\evensidemargin \oddsidemargin
\marginparwidth 0.5in
\textwidth 6.5in

\def\cB{{\mathcal B}}

\def\bnd{{\bf bnd}}

\def\cC{{\mathcal C}}

\def\cc{{\curvearrowright}}

\def\E{{\mathbb E}}

\def\cK{{\mathcal{K}}}

\def\cM{{\cal M}}

\def\N{{\mathbb N}}
\def\cN{{\mathcal N}}

\def\P{{\mathbb P}}
\def\Pr{{\textrm{Pr}}}
\def\Prob{{\textrm{Pr}}}

\def\R{{\mathbb R}}

\def\tsigma{\tilde \sigma}

\def\tS{\widetilde{ \Sub_G}}
\def\shadow{{\textrm{Shd}}}
\def\Schreier{{\textrm{Schreier}}}

\def\Sub{{\textrm{Sub}}}

\def\Tree{\textrm{Tree}}

\def\chix{{\raise.5ex\hbox{$\chi$}}}

\def\Z{{\mathbb Z}}

\begin{document}
\title{Random walks on random coset spaces with applications to Furstenberg entropy}
\author{Lewis Bowen\footnote{supported in part by NSF grant DMS-0968762, NSF CAREER Award DMS-0954606 and BSF grant 2008274} \\ Texas A\&M University}
\maketitle
\begin{abstract}
We determine the range of Furstenberg entropy for stationary ergodic actions of nonabelian free groups by an explicit construction involving random walks on random coset spaces.
\end{abstract}

\noindent
{\bf Keywords}: Furstenberg entropy, $\mu$-entropy, random walk entropy, stationary dynamical systems, Poisson boundaries\\
{\bf MSC}:37A40, 37A15, 37A50, 60G10\\

\noindent
\tableofcontents

\section{Introduction}
Let $\mu$ be a Borel probability measure on a locally compact group $G$. An action of $G$ on a probability space $(X,\eta)$ is {\em $\mu$-stationary} if $\eta=\mu*\eta$ where
$$\mu*\eta:= \int g_*\eta~d\mu(g)$$
is the {\em convolution} of $\mu$ with $\eta$. There is significant interest in understanding the structure of stationary actions and  their  connections with random walks \cite{Fu63b, Fu71, Fu72, Fu80}, rigidity theory \cite{NZ99,NZ00,NZ02a, NZ02b, Ne03} and classification of invariant measures \cite{BFLM11, BQ09, BQ11a, BQ11b}. A general structure theory is presented in \cite{FG10}. 

Stationary systems are abundant; indeed every continuous action of $G$ on a compact metric space admits a stationary measure. However tractable examples, other than Poisson boundaries and measure-preserving actions, are somewhat lacking. One of the main contributions of this paper is the construction of new examples.

The {\em Furstenberg entropy} or {\em $\mu$-entropy} of a $\mu$-stationary action of $G$ on a probability space $(X,\eta)$ is a fundamental invariant defined in \cite{Fu63a} by
$$h_\mu(X,\eta):=\iint -\log \frac{d\eta \circ g}{d\eta}(x)~d\eta(x)~d\mu(g).$$

By Jensen's inequality this entropy is always nonnegative. It equals zero if and only if the action is measure-preserving. One of the main results of \cite{NZ00} and \cite{NZ02a} is that if $G$ is a connected higher rank real semisimple Lie group with finite center and the action satisfies a certain mixing hypothesis, then this entropy can take on only a finite number of values corresponding with the actions of $G$ on homogeneous spaces $(G/Q,\nu_Q)$ where $Q < G$ is a parabolic subgroup. Indeed, it is shown that any such $(G,\mu)$-space is a relatively measure-preserving extension of one of these actions. This is a crucial step in Nevo-Zimmer's proof of the generalized intermediate factor theorem, which constitutes a major generalization of Margulis' normal subgroup theorem. 

These results motivate the\\

\noindent {\bf Furstenberg entropy realization problem}: {\em Given $(G,\mu)$ what are all possible values of the $\mu$-entropy $h_\mu(X,\eta)$ as $(X,\eta)$ varies over all ergodic $\mu$-stationary actions of G? }\\

In \cite{NZ00}, page 323, the authors remark that they do not know the full set of possible values of the Furstenberg entropy for a given $(G,\mu)$ or even whether this set of values contains an interval (for any non-amenable group $G$). However, they prove that if $G$ is $PSL_2(\R)$ or a semisimple group of real rank $\ge 2$ containing a parabolic subgroup that maps onto $PSL_2(\R)$ then infinitely many different values are achieved \cite[Theorem 3.4]{NZ00}. It is also proven that if $G$ has property (T) then there is an open interval $(0,\epsilon(\mu))$ containing no values of $h_\mu(X,\eta)$ for any ergodic $\mu$-stationary $G$-systems $(X,\eta)$ \cite{Ne03}. Our main theorem is:

\begin{thm}\label{thm:main}
Let $G=\langle s_1,\ldots, s_r \rangle$ be a free group of rank $2\le r <\infty$, $\mu$ be the uniform probability measure on $\{s_1,\ldots, s_r, s_1^{-1},\ldots, s_r^{-1}\}$ and $h_{max}(\mu)$ denote the maximum value of the $\mu$-entropy over all $\mu$-stationary $G$-actions $(X,\eta)$. Then for every $t \in [0,h_{max}(\mu)]$ there exists an ergodic $\mu$-stationary $G$-action on a probability space $(X,\eta)$ with $h_\mu(X,\eta)=t$. 
\end{thm}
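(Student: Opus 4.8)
The plan is to first observe that the \emph{non-ergodic} version of the statement is essentially trivial, so that the entire difficulty lies in ergodicity. Indeed, the value $0$ is realized by any measure-preserving action, the value $h_{max}(\mu)$ is realized by the Poisson boundary $(B,\nu)$ of $(G,\mu)$, and if $(X_0,\eta_0),(X_1,\eta_1)$ realize values $t_0,t_1$ then the disjoint union with weights $(1-\theta,\theta)$ realizes $(1-\theta)t_0+\theta t_1$, since each piece is $G$-invariant and the reweighting leaves the Radon--Nikodym cocycle on it unchanged. Hence $[0,h_{max}(\mu)]$ is realized by \emph{some} stationary action; the content of the theorem is that each value is realized \emph{ergodically}, and the disjoint-union trick destroys ergodicity.

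To produce ergodic examples I would use the construction advertised in the title. Fix an invariant random subgroup (IRS) $\lambda$, i.e.\ a conjugation-invariant probability measure on $\Sub(G)$, and form the associated Poisson bundle: the space of pairs $(H,\xi)$ where $\xi$ ranges over the Poisson boundary of the $\mu$-random walk on the Schreier coset graph $H\backslash G$, with the measure that disintegrates over $\lambda$ as the harmonic measure in each fibre. The diagonal $G$-action is $\mu$-stationary, and the first step is to establish the entropy formula
$$ h_\mu(X,\eta)=\int_{\Sub(G)} h_{RW}(H\backslash G)\,d\lambda(H), $$
where $h_{RW}(H\backslash G)$ is the random-walk (Avez) entropy of the pushforward walk on the coset graph. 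Since $H\mapsto h_{RW}(H\backslash G)$ is conjugation-invariant, it is $\lambda$-a.s.\ constant when $\lambda$ is ergodic, so an ergodic IRS yields an ergodic stationary action whose Furstenberg entropy equals this constant. The two extremes $\lambda=\delta_{\{e\}}$ (giving $H\backslash G=G$, the tree, with entropy $h_{max}(\mu)$) and $\lambda=\delta_G$ (giving a point, entropy $0$) are ergodic, so the task reduces to \emph{interpolating between them through ergodic IRS}.

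This interpolation is where freeness is exploited and where I expect most of the work to lie. Identifying $G=F_r$ with a $2r$-regular tree $T$, I would build an explicit one-parameter family of ergodic random Schreier graphs $H_p\backslash G$ for $p\in[0,1]$ by a Bernoulli/percolation-type process on $T$ that continuously deforms the transient tree ($p=0$, full entropy) into a recurrent object such as a line or a point ($p=1$, zero entropy), roughly by randomly sparsifying the branching and thereby tuning the density of transient ``tree-like'' directions against recurrent ``traps.'' Bernoulli processes on $T$ yield ergodic IRS, so each $\lambda_p$ is ergodic; the decisive point is to show that $c(p):=h_{RW}(H_p\backslash G)$ is a \emph{continuous} function of $p$ with $c(0)=h_{max}(\mu)$ and $c(1)=0$, after which the intermediate value theorem fills the interval, the endpoints being covered separately.

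The main obstacle is precisely this last analytic step: controlling the random-walk entropy of a random Schreier graph and showing it sweeps the full range continuously. One must rule out jumps and plateaus in $c(p)$, which I would attack through quantitative estimates --- for instance via the rate of escape and the fundamental inequality $h\le \ell v$, or via an explicit description of the boundary of the walk on the random coset graph --- together with a stability argument giving continuous dependence on the percolation parameter. A secondary, more technical obstacle is a rigorous proof of the entropy formula above, identifying the Radon--Nikodym cocycle of the Poisson bundle with the boundary cocycle of the fibrewise random walk (the pmp base contributing nothing), which rests on Kaimanovich-type entropy theory for the coset walks.
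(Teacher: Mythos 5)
Your high-level strategy coincides with the paper's: form the Poisson bundle over an IRS, prove a Kaimanovich--Vershik entropy formula for it, and then run a continuous one-parameter family of ergodic IRS's between full entropy and zero entropy and invoke the intermediate value theorem. The gap is exactly at the step you flag as ``the main obstacle,'' and it is a genuine one: continuity of $\lambda\mapsto h_\mu(B(\Sub_G),\nu_\lambda)$ in the weak* topology is \emph{false} in general. The paper's own introduction gives the counterexample: if $N_i$ is a decreasing chain of finite-index normal subgroups with trivial intersection, then $\delta_{N_i}\to\delta_{\{e\}}$ weakly* while the entropies are identically $0$ and the limit has entropy $h_{max}(\mu)$. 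So no soft ``stability argument'' in the percolation parameter can work, and the inequality $h\le \ell v$ only bounds entropy from above and cannot rule out downward jumps. What is needed, and what you do not supply, is (i) a verifiable sufficient condition for continuity and (ii) a family designed so that the condition holds \emph{uniformly along the whole path}. The paper's condition is ``controlled return-time probabilities'' for tree-like coset spaces (Theorem \ref{thm:continuous}): uniformly over the family of IRS's, the walk on $K\backslash G$ returns to the basepoint after time $n$ with probability tending to $0$. This is what converts the boundary Radon--Nikodym cocycle into an essentially finite-range, hence weak*-continuous, functional of the Schreier graph.

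The second missing idea is the construction that guarantees this uniform control. A naive Bernoulli sparsification of the tree gives no handle on the return times of the resulting coset graphs. The paper instead fixes a single subgroup $K_n$ (with finitely many conjugates, tree-like coset graph, and finite expected number of returns $\overline{R}(K_n)$), builds a $2$-complex $X_{K_n}$ on its Schreier graph, performs Bernoulli($p$) percolation on the $2$-cells, and takes $S_{K_n,\omega}$ to be the fundamental group of the complement of the retained cells. Every subgroup so produced is contained in $K_n$, hence its expected number of returns is at most $\overline{R}(K_n)<\infty$ by monotonicity (Lemma \ref{lem:R-monotone}); this single observation gives the uniform control along the entire path $p\in[0,1]$ and hence continuity of $p\mapsto h_\mu(\Psi_*\eta_{n,p})$. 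Note also that a single path from $h_{max}(\mu)$ all the way down to $0$, as you propose, is not available by this method: zero entropy at the endpoint forces the return probabilities to degenerate (the natural limit object is the recurrent walk on $\Z$), so the paper uses a \emph{sequence} of paths whose lower endpoints $h_\mu(\eta_n)$ tend to $0$ by upper semicontinuity, with $t=0$ realized separately by a measure-preserving action. Finally, a small caveat on your entropy formula: the paper proves $h_\mu=\lim_n n^{-1}\int H(\mu^n_K)\,d\lambda(K)$, i.e.\ the limit of the integrals; writing it as $\int h_{RW}(K\backslash G)\,d\lambda(K)$ presupposes an exchange of limit and integral that is not justified as stated (though it is not needed for the argument).
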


To sketch the proof and explain further results, let us recall the notion of Poisson boundary. So consider a locally compact group $G$ with a probability measure $\mu$ on $G$. Let $X_1,X_2,\ldots$ be a sequence of independent random variables each with law $\mu$. The sequence $\{Z_n\}_{n=1}^\infty$ where $Z_n := X_1\cdots X_n$ is the random walk induced by $\mu$. The {\em Poisson boundary} of this random walk, denoted $(B,\nu)$, is the space of ergodic components of the time shift on $(G^\N, \P)$ where $\P$ is the law of the random walk $\{Z_n\}_{n=1}^\infty$. Because the time shift commutes with the left-action of $G$ on $G^\N$, $G$ acts on the Poisson boundary. This action is $\mu$-stationary. It is well-known that $h_\mu(B,\nu)=h_{max}(\mu)$ (see e.g. \cite[\S 3.2, Corollary 3]{KV83}).

If $K<G$ is a closed subgroup, then we may consider the random walk $\{KZ_n\}_{n=1}^\infty$ on the coset space $K\backslash G$. The Poisson boundary of this random walk is the space $(B_K,\nu_K)$ of ergodic components of the time shift on $( (K\backslash G)^\N, \P_K)$ where $\P_K$ is the law of the random walk $\{KZ_n\}_{n=1}^\infty$. If $K$ is normal in $G$, then $G$ acts on the left on $(K\backslash G)^\N$ and this action descends to an action on $B_K$. Moreover $\nu_K$ is $\mu$-stationary. More generally, if $K$ has only finitely many conjugates $\{K_1,\ldots, K_n\}$  then $G$ acts on the left on $\cup_{i=1}^n (K_i \backslash G)^\N$ and this action descends to an action on the finite union $\cup_{i=1}^n  B_{K_i}$. Moreover this preserves an ergodic stationary measure.

Our second main result is: 

\begin{thm}\label{thm:dense}
Let $(G,\mu)$ be as in Theorem \ref{thm:main}. Then the set of $\mu$-entropies of actions of the form $G \cc ( \cup_{i=1}^n B_{K_i}, \frac{1}{n}\sum_{i=1}^n \nu_{K_i})$ (where $\{K_1,\ldots, K_n\}$ is a conjugacy class of a subgroup of $G$), is dense in $[0,h_{max}(\mu)]$. 
\end{thm}

If $K$ is not normal in $G$ then there is no canonical action of $G$ on $B_K$. To remedy this, consider the space $\Sub_G$ of all closed subgroups of $G$. $G$ acts on this space by conjugation. Let $\cM(\Sub_G)$ denote the space of conjugation-invariant Borel probability measures on $\Sub_G$. A random subgroup with law $\lambda \in \cM(\Sub_G)$ is called an {\em invariant random subgroup} or IRS for short. This term was coined in \cite{ABBGNRS11}. There has been a recent increase in studies of the action of $G$ on $\Sub_G$ and its invariant measures \cite{Bo12, AGV12, Vo12, ABBGNRS11, Ve11, Sa11, Gr11, Ve10, BS06, DS02, GS99, SZ94}. 


For $\lambda \in \cM(\Sub_G)$, we consider the random walk $\{KZ_n\}_{n=1}^\infty$ on the coset space $K \backslash G$ where $K<G$ is random with law $\lambda$ (and $Z_n$ are as above). The Poisson boundary of this random walk is the space $(B(\Sub_G), \nu_\lambda)$ of ergodic components of the time shift on $( \tS, \P_\lambda)$ where $\tS$ is the set of all $(K;Kg_0,Kg_1,\ldots)$ with $K \in \Sub_G$, $g_0,g_1,\ldots \in G$ and $\P_\lambda$ is the law of $(K; Kg_0, Kg_1,\ldots)$. The group $G$ naturally acts on this space and $\nu_\lambda$ is stationary and ergodic if $\lambda$ is ergodic. 

Incidentally, we will prove a few fundamental results about these random walks in the case of an arbitrary countable discrete group $G$. For example, the {\em random walk entropy} of the walk $\{Z_n\}_{n=1}^\infty$ on $G$ is defined to be $\lim_{n\to\infty} n^{-1}H( \mu^n)$ where $\mu^n$ is the $n$-fold convolution power of $\mu$ and $H(\mu^n) = - \sum_{g\in G} \mu^n(\{g\})\log\mu^n(\{g\})$. In \cite{KV83}, Kaimanovich and Vershik proved that the random walk entropy equals the Furstenberg entropy of the associated Poisson boundary. In  \S \ref{sec:KV} this result is generalized to random walks on the coset space of an invariant random subgroup.

The map which takes $\lambda \in \cM(\Sub_G)$ to $h_\mu( B(\Sub_G),\nu_\lambda)$ is not continuous in general. For example, consider a decreasing sequence $\{N_i\}_{i=1}^\infty$ of finite-index normal subgroups with trivial intersection $\cap_{i=1}^\infty N_i = \{e\}$. If $\delta_i \in \cM(\Sub_G)$ is the Dirac measure concentrated on $N_i$ then $(B(\Sub_G), \nu_{\delta_i}) = (B_{N_i}, \nu_{N_i})$. Because $N_i$ has finite index, $h_\mu( B_{N_i}, \nu_{N_i}) = 0$. However, $\delta_i$ converges as $i\to\infty$ to $\delta_e$, the Dirac measure concentrated on the trivial subgroup. Because $h_\mu( B_e, \nu_e) = h_{max}(\mu)>0$, this map is discontinuous. In spite of this discontinuity, we will show that when $G$ is a free group, there exist paths in $\cM(\Sub_G)$ on which entropy varies continuously and use these paths to establish Theorem \ref{thm:main}.

{\bf Acknowledgements:} I'd like to thank Amos Nevo for asking me whether Theorem \ref{thm:main} is true and for several motivating discussions and Yuri Lima, Yair Hartman and Omer Tamuz for discovering an error in a previous version. I'd also like to thank the anonymous referees for their careful readings and helpful criticism.


\section{Poisson boundaries of random walks on coset spaces}\label{sec:general}
Let $G$ be a separable locally compact group with a probability measure $\mu$. We assume $\mu$ is {\em admissible}: its support generates $G$ as a semigroup and some convolution power $\mu^n$ is absolutely continuous with respect to Haar measure on $G$. The purpose of this section is to set notation and define the Poisson boundary of the $\mu$-induced random walk on a coset space $K\backslash G$.

Let $\N:=\{0,1,2,\ldots\}$, $\N_{\ge 1}:=\{1,2,\ldots\}$ and $m:G^{\N} \to G^{\N}$ be the multiplication map
$$m(g_0,g_1,g_2,\ldots) := (g_0,g_0g_1,g_0g_1g_2,\ldots).$$
For $g\in G$, define a probability measure on $G^{\N}$ by $\P_g:=m_*(\delta_g \times \mu^{\N_{\ge 1}})$ where $\delta_g$ is the Dirac probability measure concentrated on $\{g\} \subset G$. We write $\P$ to denote $\P_e$ where $e$ is the identity element.

Let $K<G$ be a closed subgroup and $\pi_K:G^{\N} \to (K\backslash G)^{\N}$ the quotient map 
$$\pi_K(g_0,g_1,g_2,\ldots):=(Kg_0, Kg_1,Kg_2,\ldots).$$
$\P_{Kg} := (\pi_K)_*\P_g$ denotes the pushforward measure. Of course, $\P_K:=\P_{Ke}$.

Note that $G$ acts on $G^\N$ on the left by $g(g_0,g_1,\ldots)=(gg_0,gg_1,\ldots)$. This action commutes with the shift $\sigma:G^{\N} \to G^{\N}$ defined by:
$$\sigma(g_0,g_1,g_2, \ldots) := (g_1,g_2, \ldots).$$
Let $\cB(\sigma)$ be the sigma-algebra of $\sigma$-invariant Borel subsets of $G^\N$. By Mackey's Point Realization Theorem \cite[Theorem 1]{Ma62}, there exists a standard Borel probability space $(B_e,\nu_e)$, a Borel action of $G$ on $B$ and a $G$-equivariant Borel map $\bnd: (G^\N)' \to B'$ (where $(G^\N)' \subset G^\N$ and $B' \subset B_e$ are conull) such that the inverse image of the Borel sigma-algebra on $B_e$ equals $\cB(\sigma)$ (modulo sets of measure zero). The space $(B_e,\nu_e)$ is called the {\em Poisson boundary} of $(G,\mu)$. 


Similarly, let $\sigma_K:(K\backslash G)^{\N} \to (K\backslash G)^{\N}$ be the shift map:
$$\sigma_K(Kg_0,Kg_1,Kg_2, \ldots) := (Kg_1,Kg_2, \ldots).$$
Denote the sigma-algebra of shift-invariant Borel subsets of $(K\backslash G)^{\N}$ by $\cB(\sigma_K)$. Mackey's Point Realization Theorem implies the existence of a standard Borel space $B_K$, a probability measure $\nu_K$ on $B_K$ and a Borel map $\bnd_K: ((K\backslash G)^\N)' \to B_K'$ (where $((K\backslash G)^\N)' \subset (K\backslash \N), B_K' \subset B_K$ are conull) such that $(\bnd_K)_*\P_K=\nu_K$, $\cB(\sigma_K)$ is the pullback of the Borel sigma-algebra on $B_K$ (modulo sets of measure zero). Let $\nu_{Kg} := (\bnd_K)_*\P_{Kg}$ be the pushforward measure on $B_K$ (for any $g\in G$). Then $(B_K, \nu_K)$ is the {\em Poisson boundary} of $K\backslash G$ generated by $\mu$.

The commutative diagram:
\begin{displaymath}
\xymatrix{(G^{\N}, \P) \ar[d]^{\pi_K} \ar[r]^{\bnd} & (B_e, \nu_e)  \ar[d]^{\pi_K} \\
((K\backslash G)^{\N}, \P_K)   \ar[r]^{\bnd} & (B_K, \nu_K) }
\end{displaymath}
uses an abuse of notation: we let $\pi_K$ denote the map from $G^{\N}$ to $(K\backslash G)^{\N}$ as well as the induced map from $B_e$ to $B_K$. Also we let $\bnd$ denote the map from (a conull subset of) $G^{\N}$ to $B_e$ as well as the map from (a conull subset of) $(K\backslash G)^{\N}$ to $B_K$ when no confusion can arise.




\subsection{The space of subgroups}\label{sec:space}

The group $G$ acts on the set of its closed subgroups $\Sub_G$ by conjugation. The set $\Sub_G$ with the topology of uniform convergence on compact subsets is a compact metrizable space. Let $\cM(\Sub_G)$ be the space of all conjugation-invariant Borel probability measures on $\Sub_G$.

Let $\tS = \{ (K; Kg_0, Kg_1, Kg_2,\ldots):~ K\in \Sub_G, g_0,g_1, g_2,\ldots \in G\}$. If $2^G$ denotes the space of closed subsets of $G$ then $\tS$ naturally embeds into the product space $(2^G)^\N$ by $(K;Kg_0,Kg_1,\ldots) \mapsto (K,Kg_0,Kg_1,\ldots)$. The space $2^G$ is compact under the topology of uniform convergence on compact subsets. By Tychonoff's Theorem, $(2^G)^\N$ is also compact and therefore, since $\tS$ is closed as a subset of $(2^G)^\N$, it is also compact under the subspace topology. Given an invariant measure $\lambda \in \cM(\Sub_G)$, let $\P_\lambda$ be the measure on $\tS$ whose fiber over $K \in \Sub_G$ is $\P_K$:
$$d\P_\lambda(K; Kg_0, Kg_1, Kg_2,\ldots)= d\P_K(Kg_0,Kg_1,\ldots) d\lambda(K).$$
The group $G$ acts on $\tS$ by
$$\gamma (K; Kg_0, Kg_1,\ldots) := (K^\gamma; \gamma K g_0, \gamma K g_1,\ldots)\quad \forall \gamma, g_0, g_1, \in G, K \in \Sub_G.$$
This action commutes with the shift action $\tsigma:\tS \to \tS$ which is defined by
$$\tsigma(K; Kg_0, Kg_1,\ldots) := (K; Kg_1,\ldots).$$
Let $\cB(\tilde{\sigma})$ denote the sigma-algebra of $\tsigma$-invariant Borel subsets of $\tS$. By Mackey's Point Realization Theorem, there exists a standard Borel probability space $(B(\Sub_G), \nu_\lambda)$, a nonsingular $G$-action on $B(\Sub_G)$ and a $G$-equivariant Borel map $\bnd:\tS' \to B(\Sub_G)$ (where $\tS' \subset \tS$ is conull) such that $\bnd_*\P_\lambda=\nu_\lambda$ and $\cB(\tsigma)$ is the pullback of the Borel sigma-algebra on $B(\Sub_G)$ (up to sets of measure zero). By Mackey's Point Realization Theorem again, for any $K \in \Sub_G$ there is a Borel map $\phi:B_K' \to B(\Sub_G)$ (where $B'_K \subset B_K$ is conull) such that $\phi(B'_K) = \bnd(\{ (K; Kg_0, Kg_1, Kg_2,\ldots):~ g_0,g_1, g_2,\ldots \in G\})$ and $d\nu_\lambda(\xi) = d(\phi_*\nu_K)(\xi) d\lambda(K)$. By abuse of notation, we identify $(B_K,\nu_K)$ with its image under $\phi$. Thus we write $d\nu_\lambda(\xi) = d\nu_K(\xi) d\lambda(K)$.





We have the following commutative diagram:
\begin{displaymath}\label{diagram2}
\xymatrix{  ( \Sub_G \times G^{\N}, \lambda \times \P)  \ar[d]^{\pi} \ar[r]^{\bnd} & (\Sub_G \times B_e, \lambda \times \nu_e) \ar[d]^{\pi} \\
  (\tS, \P_\lambda)\ar[r]^{\bnd} & (B(\Sub_G), \nu_\lambda) }
\end{displaymath}
By abuse of notation we let $\bnd$ denote both the map from $\Sub_G \times G^{\N}$ to $\Sub_G \times B_e$ which takes $(K; g_0,g_1,\ldots)$ to $(K, \bnd( g_0,g_1,\ldots))$ as well as the map from $\tS$ to  $B(\Sub_G)$. We also let $\pi$ denote both the map from $\Sub_G \times G^{\N}$ to $\tS$ which takes $(K; g_0,g_1,\ldots)$ to $(K; Kg_0, Kg_1, \ldots)$ as well as the induced map from (a conull subset of) $\Sub_G \times B_e$ to  $B(\Sub_G)$.


\begin{lem}
If $\lambda \in \cM(\Sub_G)$ is ergodic for the $G$-action on $\Sub_G$ then $\nu_\lambda$ is also ergodic for the $G$-action on $B(\Sub_G)$. Moreover $\nu_\lambda$ is $\mu$-stationary.
\end{lem}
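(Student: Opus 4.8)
The statement has two parts; I would treat stationarity first and then ergodicity.

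\emph{Stationarity.} The plan is to deduce $\nu_\lambda=\mu*\nu_\lambda$ from a single identity on the path space $\tS$, namely
\[
\tsigma_*\P_\lambda=\int_G \gamma_*\P_\lambda\,d\mu(\gamma).
\]
To verify this I would unwind both sides. A $\P_\lambda$-sample is $(K;K,K\omega_1,\dots)$ with $K$ of law $\lambda$ and $(\omega_0,\omega_1,\dots)$ of law $\P$; applying $\tsigma$ produces a walk on $K\backslash G$ started at the $\mu$-distributed coset $K\omega_1$. On the other side, for fixed $\gamma$ the substitution $L=K^\gamma$ together with the conjugation-invariance $(\mathrm{conj}_\gamma)_*\lambda=\lambda$ shows that $\gamma_*\P_\lambda$ is the law of a walk whose subgroup $L$ has law $\lambda$ and which is started at $L\gamma$; averaging $\gamma$ against $\mu$ gives exactly the previous description. \emph{This is the one place where conjugation-invariance of $\lambda$ is essential.} Granting the identity, I apply $\bnd$: since $\bnd$ is the quotient by the $\tsigma$-invariant $\sigma$-algebra we have $\bnd\circ\tsigma=\bnd$, and since $\bnd$ is $G$-equivariant, pushing the identity forward yields $\nu_\lambda=\bnd_*\P_\lambda=\int_G\gamma_*\nu_\lambda\,d\mu(\gamma)$, which is stationarity.

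\emph{Ergodicity: reduction.} I would use the $G$-equivariant factor map $\pi\colon(\Sub_G\times B_e,\lambda\times\nu_e)\to(B(\Sub_G),\nu_\lambda)$, $(K,\xi)\mapsto(K,\pi_K\xi)$, which satisfies $\pi_*(\lambda\times\nu_e)=\nu_\lambda$. If $f\in L^\infty(B(\Sub_G),\nu_\lambda)$ is $G$-invariant, then $\hat F:=f\circ\pi$ is a bounded $G$-invariant function on $\Sub_G\times B_e$ for the diagonal action $\gamma(K,\xi)=(K^\gamma,\gamma\xi)$, and $f$ is constant iff $\hat F$ is. So it suffices to show every such $\hat F$ is $\lambda\times\nu_e$-a.e.\ constant.

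\emph{Ergodicity: the two ingredients.} First I would average over the boundary fibre: set $\psi(K):=\int_{B_e}\hat F(K,\xi)\,d\nu_e(\xi)$. Using stationarity $\nu_e=\int\gamma_*\nu_e\,d\mu(\gamma)$ and the invariance $\hat F(K,\gamma\xi)=\hat F(K^{\gamma^{-1}},\xi)$, one gets $\psi(K)=\int_G\psi(K^{\gamma^{-1}})\,d\mu(\gamma)$, so $\psi$ is harmonic for the conjugation random walk on $\Sub_G$. Since conjugation preserves $\lambda$, the associated Markov operator is an average of Koopman isometries on $L^2(\lambda)$, so harmonicity forces $\psi$ to be invariant under conjugation by $\mu$-a.e.\ $\gamma$; as $\supp(\mu)$ generates $G$, $\psi$ is conjugation-invariant, hence $\lambda$-a.e.\ equal to a constant $c$ by ergodicity of $\lambda$. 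Second, to pass from $\psi$ to $\hat F$ itself I would run the boundary martingale on $(\Sub_G\times G^{\N+1},\lambda\times\P)$: with $\cF_n=\sigma(K,\omega_0,\dots,\omega_n)$ and $\xi=\bnd(\omega)$, the Markov property and shift-invariance of $\bnd$ give that the conditional law of $\xi$ given $\cF_n$ is $(\omega_n)_*\nu_e$, whence $\E[\hat F(K,\xi)\mid\cF_n]=\int\hat F(K,\omega_n\xi')\,d\nu_e(\xi')=\psi(K^{\omega_n^{-1}})$. But $K^{\omega_n^{-1}}$ has law $\lambda$ (again by conjugation-invariance and independence of $K$ from the walk), so this conditional expectation equals $c$ almost surely; letting $n\to\infty$, L\'evy's theorem gives $\hat F(K,\xi)=c$ a.e.

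\emph{Main obstacle.} The genuinely nontrivial step is the last one: constancy of the fibre-averages $\psi$ does not by itself give constancy of $\hat F$, because each fibre $B_e$ carries a nontrivial boundary and the diagonal action is not measure-preserving. The role of $B_e$ as the full Poisson boundary enters precisely here, through the convergence of the walk to the boundary that identifies $\E[\hat F\mid\cF_n]$ with $\psi(K^{\omega_n^{-1}})$ and lets the martingale recover $\hat F$. A secondary point to handle with care is the propagation from $\mu$-a.e.\ conjugation-invariance of $\psi$ to genuine $G$-invariance, which relies on $\supp(\mu)$ generating $G$.
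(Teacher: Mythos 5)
Your proof is correct, but it takes a genuinely different route from the paper's at the key step. The paper's argument is very short: it observes that $(B(\Sub_G),\nu_\lambda)$ is a $G$-equivariant factor of $(\Sub_G\times B_e,\lambda\times\nu_e)$, cites Aaronson--Lema\'nczyk [AL05] for the weak mixing of the Poisson boundary $(B_e,\nu_e)$ (so that its product with the ergodic p.m.p.\ system $(\Sub_G,\lambda)$ is ergodic), and then concludes both ergodicity and stationarity by passing to the factor --- stationarity of $\lambda\times\nu_e$ following exactly from the invariance of $\lambda$ that you flag as essential. You use the same factor map for the reduction, but instead of importing weak mixing you prove ergodicity of $(\Sub_G\times B_e,\lambda\times\nu_e)$ from scratch: the fibre average $\psi$ is harmonic for the conjugation walk, hence constant by the average-of-unitaries argument plus admissibility and ergodicity of $\lambda$, and the boundary martingale $\E[\hat F\mid\cF_n]=\psi(K^{\omega_n^{-1}})$ upgrades this to constancy of $\hat F$ via L\'evy's theorem. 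In effect you reprove the special case of [AL05] (ergodicity of the product of the Poisson boundary with an ergodic p.m.p.\ system) that the paper uses as a black box; your stationarity argument is likewise a direct path-space computation where the paper just pushes stationarity of $\lambda\times\nu_e$ through the factor. What you gain is a self-contained proof that makes visible exactly where conjugation-invariance of $\lambda$, admissibility of $\mu$, and the Poisson-boundary structure of $B_e$ enter; what the paper's version buys is brevity at the cost of the external citation. The only points deserving a little extra care in your write-up are routine: the a.e.\ invariance $\hat F(K,\gamma\xi)=\hat F(K^{\gamma^{-1}},\xi)$ must be arranged simultaneously for $\mu$-a.e.\ $\gamma$ (Fubini), and the passage from $U_\gamma\psi=\psi$ for $\mu$-a.e.\ $\gamma$ to all of $G$ uses that the closed subgroup generated by $\supp\mu$ is $G$.
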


\begin{proof}
From the diagram above, it follows that $G \cc (\Sub_G \times B_e, \lambda \times \nu_e)$ factors onto $G\cc (B(\Sub_G),\nu_\lambda)$. Because the Poisson boundary $G \cc (B_e,\nu_e)$ is weakly mixing [AL05] and $\lambda$ is ergodic, $G \cc (\Sub_G \times B_{e}, \lambda \times \nu_{e})$ is ergodic. Since  $G \cc (B(\Sub_G),\nu_\lambda)$ is a factor of an ergodic system, it is also ergodic. The measure $\lambda \times \nu_{e}$ is stationary since $\lambda$ is invariant and $\nu_{e}$ is stationary. Since  $G \cc (B(\Sub_G),\nu_\lambda)$ is a factor of a stationary system, it is also stationary.
\end{proof}

\section{Entropy formulae}\label{sec:KV}
In this section, we require $G$ to be a countable discrete group with an admissible measure $\mu$. Our goal in this section is to provide a formula for the $\mu$-entropy in terms of the so-called random walk entropy. To explain, we need a few definitions.

\begin{notation}
To simplify notation, for any $\lambda \in \cM(\Sub_G)$, let $h_\mu(\lambda):=h_\mu(B(\Sub_G), \nu_\lambda)$.
\end{notation}

We let $\mu^n$ be the $n$-fold convolution of $\mu$. In other words, if $m_n:G^n \to G$ denotes the multiplication map
$$m_n(g_1,g_2,\ldots,g_n)=g_1g_2\cdots g_n$$
and $(G^n,(\times \mu)^n)$ denotes the direct product of $n$ copies of $(G,\mu)$ then $\mu^n=(m_n)_*(\times \mu)^n$. 

For $K\in \Sub_G$, let $\mu^n_{K}$ be the measure on $K\backslash G$ given by $\mu^n_K:=(\pi_K)_*\mu^n$ where $\pi_K:G \to (K\backslash G)$ is the quotient map. Similarly, if $g,h \in G$ then $\mu^n_{gKh}$ is the measure on $gKg^{-1}\backslash G$ given by 
$$\mu^n_{gKh}( E) = \mu^n(\{ \gamma \in G:~ gKh\gamma \in E\})\quad \forall E \subset gKg^{-1}\backslash G.$$

In general, if $\omega$ is a probability measure on a finite or countable set $W$ then the {\em entropy} of $\omega$ is
$$H(\omega):=-\sum_{w \in W} \omega(\{w\}) \log( \omega(\{w\}))$$
where by convention $0\log(0)=0$.

The sequence $\{H(\mu^n)\}_{n=1}^\infty$ can be shown to be sub-additive. Therefore the limit of $\frac{H(\mu^n)}{n}$ as $n\to\infty$ exists. This limit is called the {\em random walk entropy of $(G,\mu)$}. In (\cite{KV83}, Theorem 3.1), it is shown that this coincides with the $\mu$-entropy of the Poisson boundary $(B_e,\nu_e)$. Analogously, the main result of this section is:
\begin{thm}\label{thm:hard}
Suppose $H(\mu)<\infty$. Then for any invariant measure $\lambda \in \cM(\Sub_G)$,
\begin{eqnarray*}
h_\mu(\lambda) &=& \lim_{n\to\infty} \frac{1}{n} \int H(\mu^n_K)~d\lambda(K) = \inf_{n} \frac{1}{n} \int H(\mu^n_K)~d\lambda(K) \\
 &=&\lim_{n\to\infty} \int \left(H(\mu^n_K)- H(\mu^{n-1}_K)\right)~d\lambda(K)=\inf_{n\to\infty} \int \left(H(\mu^n_K)- H(\mu^{n-1}_K)\right)~d\lambda(K).
  \end{eqnarray*}
\end{thm}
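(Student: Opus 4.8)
The plan is to first reduce the computation of $h_\mu(B(\Sub_G),\nu_\lambda)$ to a fiberwise Furstenberg entropy, exploiting the conjugation-invariance of $\lambda$, and then to prove a coset-space analogue of the Kaimanovich--Vershik theorem on each fiber. For the reduction I would compute the Radon--Nikodym cocycle of the $G$-action. Writing $d\nu_\lambda(K;\xi)=d\nu_K(\xi)\,d\lambda(K)$ and using that $\gamma$ carries the $K$-walk started at $K$ to the $K^\gamma$-walk started at the coset $K^\gamma\gamma$, i.e. $\gamma_*\nu_K=\nu_{K^\gamma\gamma}$ on $B_{K^\gamma}$, the change of variables $K'=K^{\gamma^{-1}}$ together with $\lambda=\lambda\circ(\text{conjugation by }\gamma)$ makes the base contribution disappear, leaving a purely fiberwise derivative
\[ \frac{d\nu_\lambda\circ\gamma}{d\nu_\lambda}(K;\eta)=\frac{d\nu_{K\gamma^{-1}}}{d\nu_K}(\eta), \]
where both $\nu_{K\gamma^{-1}}$ and $\nu_K$ are harmonic measures on the same space $B_K$. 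Substituting into the definition of Furstenberg entropy yields
\[ h_\mu(B(\Sub_G),\nu_\lambda)=\int_{\Sub_G} h_\mu(B_K,\nu_K)\,d\lambda(K),\qquad h_\mu(B_K,\nu_K):=\int_G\int_{B_K}-\log\frac{d\nu_{K\gamma^{-1}}}{d\nu_K}(\eta)\,d\nu_K(\eta)\,d\mu(\gamma), \]
where $h_\mu(B_K,\nu_K)$ is the Furstenberg entropy of the Poisson boundary of the $\mu$-driven walk $W_n:=KZ_n$ on the transitive right $G$-space $K\backslash G$; no genuine $G$-action on $B_K$ is required, only the harmonic measures from the various starting cosets. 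This step needs a modest amount of care to make the field $K\mapsto(B_K,\nu_K)$ measurable, but that is routine.

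Next I would dispatch the three equalities among the right-hand quantities, which are soft. Set $a_n:=\int H(\mu^n_K)\,d\lambda(K)$ and $b_n:=a_n-a_{n-1}$. Conditioning $W_{m+n}$ on the intermediate coset $KZ_m$ and using that, given $KZ_m=Kg$, the remaining $n$ steps distribute $W_{m+n}$ as the image of $\mu^n_{K^{g^{-1}}}$ under a bijection, one gets $H(\mu^{m+n}_K)\le H(\mu^m_K)+\int_G H(\mu^n_{K^{h^{-1}}})\,d\mu^m(h)$; integrating in $K$ and invoking conjugation-invariance of $\lambda$ collapses the last term to $a_n$, so $a_{m+n}\le a_m+a_n$ and $\lim_n a_n/n=\inf_n a_n/n$. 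For the increments, right multiplications act bijectively on $K\backslash G$, so $\{W_n\}$ is a Markov chain and $X_1\to W_n\to W_{n+1}$ is Markov; the data-processing inequality gives $I(X_1;W_{n+1})\le I(X_1;W_n)$ fiberwise, and integrating (again using conjugation-invariance to identify $\int I(X_1;W_n)\,d\lambda$ with $b_n$) yields $b_{n+1}\le b_n$, hence $\lim_n b_n=\inf_n b_n$; a Cesàro argument identifies this common value with $\lim_n a_n/n$. It remains to prove $h_\mu(B(\Sub_G),\nu_\lambda)=\lim_n b_n$.

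This last identity is the \textbf{main obstacle}: the fiberwise Kaimanovich--Vershik formula $h_\mu(B_K,\nu_K)=\lim_n\bigl(H(\mu^n_K)-H(\mu^{n-1}_K)\bigr)$. Because $K\backslash G$ is not a group I cannot cite \cite{KV83} verbatim, but I expect their argument to go through for the homogeneous right $G$-space $K\backslash G$: the increments act by bijections so entropy is translation-invariant along the walk, the tail of $\{W_n\}$ is exactly $(B_K,\nu_K)$ with stationarity $\nu_K=\int_G\nu_{K\gamma}\,d\mu(\gamma)$, the Poisson kernel $\tfrac{d\nu_{Kg}}{d\nu_K}$ is realized as a bounded martingale limit, and the information-theoretic identity relating $-\log$ of this kernel to the asymptotic entropy survives. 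Granting the fiberwise formula, I would finish by exchanging limit and integral: right multiplication being a bijection of $K\backslash G$ gives $H(\mu^n_K)\ge H(\mu^{n-1}_K)$, while $W_n$ being a function of $(W_{n-1},X_n)$ gives $H(\mu^n_K)-H(\mu^{n-1}_K)\le H(\mu)$, so the fiber increments lie in $[0,H(\mu)]$ and dominated convergence yields $\lim_n b_n=\int_{\Sub_G}\lim_n\bigl(H(\mu^n_K)-H(\mu^{n-1}_K)\bigr)\,d\lambda(K)=\int_{\Sub_G}h_\mu(B_K,\nu_K)\,d\lambda(K)=h_\mu(B(\Sub_G),\nu_\lambda)$, completing all four equalities.
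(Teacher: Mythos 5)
Your reduction via the Radon--Nikodym cocycle, your subadditivity of $a_n=\int H(\mu^n_K)\,d\lambda(K)$, and your monotonicity of $b_n=a_n-a_{n-1}$ via data processing are all correct and match the paper's Lemmas \ref{lem:rn} and \ref{lem:11} in substance. But the step you yourself flag as the main obstacle --- the \emph{fiberwise} Kaimanovich--Vershik formula $h_\mu(B_K,\nu_K)=\lim_n\bigl(H(\mu^n_K)-H(\mu^{n-1}_K)\bigr)$ --- is a genuine gap, and not one that closes by ``expecting KV's argument to go through.'' The KV proof rests on homogeneity: on a group, the walk restarted from any point has the same law up to translation. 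On $K\backslash G$ the walk restarted from $Kg$ is the walk on $(g^{-1}Kg)\backslash G$ from its basepoint, so translation invariance fails. Concretely, your own computation shows what is actually monotone fiberwise: $I(X_1;W_n)=H(\mu^n_K)-\sum_g\mu(g)H(\mu^{n-1}_{Kg})$, not $H(\mu^n_K)-H(\mu^{n-1}_K)$. The latter increments lie in $[0,H(\mu)]$ but need not converge for a fixed $K$, so the dominated convergence step has no a.e.\ limit to converge to; and even where a limit exists there is no argument identifying it with your fiberwise entropy. The same mismatch occurs on the boundary side: the quantity that the conditional-entropy computation produces fiberwise is $\sum_{Kg}\mu_K(Kg)\int\log\frac{d\nu_{Kg}}{d\nu_K}\,d\nu_{Kg}$, whereas your $h_\mu(B_K,\nu_K)$ is $\sum_g\mu(g)\int-\log\frac{d\nu_{Kg^{-1}}}{d\nu_K}\,d\nu_K$; these two agree only after integrating against $\lambda$ and invoking the cocycle identity together with conjugation-invariance.

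The repair is to never disintegrate. The paper computes, for each $n$, the conditional entropy $H(\alpha^K_1|\eta^K_n)$ of the first step given the $n$-tail, obtaining fiberwise
$H(\mu_K)-H(\alpha^K_1|\eta^K_n)=H(\mu^n_K)-\sum_g\mu(g)H(\mu^{n-1}_{Kg})$;
integrating against the conjugation-invariant $\lambda$ collapses the last sum to $\int H(\mu^{n-1}_K)\,d\lambda(K)$, giving $b_n$. Since $\eta^K_{n-1}$ refines $\eta^K_n$, the conditional entropies increase to $H(\alpha^K_1|\eta^K)$ (martingale convergence on the tail $\sigma$-algebra), which identifies $\lim_n b_n$ with $\int\bigl(H(\mu_K)-H(\alpha^K_1|\eta^K)\bigr)d\lambda(K)=\int\sum_{Kg}\mu_K(Kg)\int\log\frac{d\nu_{Kg}}{d\nu_K}\,d\nu_{Kg}\,d\lambda(K)$; a final change of variables using the cocycle identity and conjugation-invariance of $\lambda$ converts this into the Furstenberg entropy of $(B(\Sub_G),\nu_\lambda)$. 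So conjugation-invariance must be used three times, not once, and always at the level of the integrated quantities; your outline uses it only for the soft inequalities and then asks for a fiberwise statement that is strictly stronger than the theorem and likely false in general.
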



For $y \in G^{\N}$ or $y\in (K\backslash G)^{\N}$ we let $y_n$ be the $n$-coordinate of $y$. So $y=(y_0,y_1,\ldots)$. We let $\alpha_n$ be the partition of $G^{\N}$ determined by the condition that $y,y'$ are in the same partition element if and only if $y'_i =y_i$ for $0\le i \le n$. We let $\eta_n$ be the partition of $G^{\N}$ determined by the condition that $y,y'$ are in the same partition element if and only if $y'_i =y_i$ for $i \ge n$. We let $\tau_n$ be the partition of $G^{\N}$ determined by the condition that $y,y'$ are in the same partition element if and only if $y'_n =y_n$. We define the partitions $\alpha^K_n,\eta_n^K, \tau_n^K$ of $(K\backslash G)^{\N}$ similarly. We let $\alpha_n^K(y)$ denote the partition element of $\alpha_n^K$ that contains $y$ (and similar notation holds for the other partitions).

Given partitions $\alpha,\beta$ of a probability space $(X,\kappa)$, the entropy of $\alpha$ relative to $\beta$ is:
$$H(\alpha|\beta):=-\int \log\left( \kappa( \alpha(x)|\beta(x))\right)~d\kappa(x)$$
where $\alpha(x)$ denotes the partition element of $\alpha$ containing $x$ and $ \kappa( \alpha(x)|\beta(x)) = \frac{\kappa(\alpha(x) \cap \beta(x))}{\kappa(\beta(x))}$. We assume throughout the rest of this section that $H(\mu)<\infty$ and let $\lambda \in \cM(\Sub_G)$ be fixed.
\begin{lem}
For any $K \in \Sub_G$,
$$\int H(\alpha^K_1|\eta^K_n) ~d\lambda(K) = \int \left(H(\mu_K) - H(\mu^n_K) + H(\mu_{K}^{n-1})\right)~d\lambda(K).$$
\end{lem}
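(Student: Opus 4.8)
The plan is to fix $K$, work under the measure $\P_K$, and exploit the Markov structure of the coset walk, postponing the integration against $\lambda$ to the very end. First I would record the elementary facts: under $\P_K$ the coordinate $y_0 = Ke$ is $\P_K$-almost surely constant, and $y_n = KZ_n = (Ke)X_1\cdots X_n$, so that $\{y_n\}$ is a Markov chain on $K\backslash G$ driven by right multiplication by the i.i.d. increments $X_i\sim\mu$. In particular $y_1\sim\mu_K$ and $y_n\sim\mu^n_K$, whence $H(\tau_1^K)=H(\mu_K)$ and $H(\tau_n^K)=H(\mu^n_K)$.

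Next I would reduce conditioning on the entire tail to conditioning on a single coordinate. Since the increments $X_{n+1},X_{n+2},\ldots$ are independent of $(X_1,\ldots,X_n)$, the future $(y_{n+1},y_{n+2},\ldots)$ is conditionally independent of the past $(y_0,\ldots,y_n)$ given $y_n$. As $\alpha_1^K$ is measurable with respect to $(y_0,y_1)$, hence with respect to the past for every $n\ge 1$, the Markov property gives $H(\alpha_1^K\mid\eta_n^K)=H(\alpha_1^K\mid\tau_n^K)$. Then the chain rule, together with the fact that $y_0$ is $\P_K$-a.s. constant, yields $H(\alpha_1^K\mid\tau_n^K)=H(\alpha_1^K\vee\tau_n^K)-H(\tau_n^K)=H(y_1,y_n)-H(y_n)=H(y_1)+H(y_n\mid y_1)-H(y_n)$.

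The only nonroutine term is $H(y_n\mid y_1)$. Given $y_1=Kg$ we have $y_n=Kg\cdot W$ with $W=X_2\cdots X_n\sim\mu^{n-1}$ independent of $y_1$; since $KgW=KgW'$ iff $(g^{-1}Kg)W=(g^{-1}Kg)W'$, the map $W\mapsto KgW$ has the same fibers as the quotient map $G\to (g^{-1}Kg)\backslash G$, so the conditional law of $y_n$ has entropy $H(\mu^{n-1}_{g^{-1}Kg})$. Noting that $h^{-1}Kh$ depends only on the coset $Kh$, averaging over $y_1$ and unfolding the sum over cosets into a sum over group elements gives $H(y_n\mid y_1)=\sum_{Kg}\mu_K(Kg)\,H(\mu^{n-1}_{g^{-1}Kg})=\sum_{h\in G}\mu(h)\,H(\mu^{n-1}_{h^{-1}Kh})$. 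Assembling the pieces, $H(\alpha_1^K\mid\eta_n^K)=H(\mu_K)-H(\mu^n_K)+\sum_{h}\mu(h)\,H(\mu^{n-1}_{h^{-1}Kh})$.

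Finally I would integrate against $\lambda$ and invoke conjugation-invariance. Writing $h^{-1}Kh=K^{h^{-1}}$ and using that $\lambda$ is invariant under $K\mapsto K^{h^{-1}}$, one gets $\int H(\mu^{n-1}_{h^{-1}Kh})\,d\lambda(K)=\int H(\mu^{n-1}_K)\,d\lambda(K)$ for every $h$; since $\sum_h\mu(h)=1$, the last term integrates to $\int H(\mu^{n-1}_K)\,d\lambda(K)$, giving the claimed identity. I expect the main obstacle to be precisely this last point: the pointwise formula carries the conjugate subgroups $h^{-1}Kh$ rather than $K$ itself, so the identity holds only after averaging against the conjugation-invariant $\lambda$ (this is also why the statement is phrased with the integrals rather than pointwise in $K$). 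Finiteness of all the entropies, needed to justify the chain rule, is not a real issue since $H(\mu^n_K)\le H(\mu^n)$.
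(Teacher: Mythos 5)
Your proof is correct and follows essentially the same route as the paper's: both reduce conditioning on the tail partition $\eta_n^K$ to conditioning on the single coordinate $y_n$ via the Markov property, identify the cross term as $\sum_{h}\mu(h)H(\mu^{n-1}_{h^{-1}Kh})$, and invoke conjugation-invariance of $\lambda$ only at the final integration. The paper writes the argument as an explicit pointwise formula for $\P_K(\alpha_1^K(y)\mid\eta_n^K(y))$ followed by taking $-\log$, whereas you phrase it through the chain rule for Shannon entropy, but this is only a cosmetic difference.
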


\begin{proof}
The Markov property implies $\P_K(\alpha^K_1(y) | \eta^K_n(y))  = \P_K(\alpha^K_1(y) | \tau^K_n(y))$ for any $K \in \Sub_G$ and $y\in (K\backslash G)^{\N}$. So
\begin{eqnarray*}
\P_K(\alpha^K_1(y) | \eta^K_n(y)) &=&  \P_K(\alpha^K_1(y) | \tau^K_n(y))\\
&=& \frac{\P_K(\{y' \in (K\backslash G)^{\N}:~y'_1=y_1,~y'_n=y_n\})}{\P_K(\{y' \in (K\backslash G)^{\N}:~y'_n=y_n\})}\\
 &=& \frac{ \mu_K(y_1) \P_K(\tau^K_n(y)|\alpha^K_1(y))}{\mu_K(y_n)}.
\end{eqnarray*}
Note $$ \P_K(\tau^K_n(y)|\alpha^K_1(y)) =\mu^{n-1}_{y_1}(y_n).$$
We now have:
\begin{eqnarray}
\P_K(\alpha^K_1(y) | \eta^K_n(y)) =\frac{ \mu_K(y_1)\mu^{n-1}_{y_1}(y_n)}{\mu_K^n(y_n)}.
\end{eqnarray}

Therefore,
\begin{eqnarray}
H(\alpha^K_1|\eta^K_n) &=& -\int \log\left( \P_K(\alpha^K_1(y) | \eta^K_n(y))\right)~d\P_K(y) \\\label{eqn:start} 
&=& H(\mu_K) - H(\mu^n_K) + \sum_{g\in G} \mu(g)H(\mu_{Kg}^{n-1}).
\end{eqnarray}
Since $\lambda$ is conjugation-invariant and $H(\mu_{Kg}^{n-1}) = H(\mu_{g^{-1}Kg}^{n-1})$,
\begin{eqnarray*}
\int H(\mu_{Kg}^{n-1})~d\lambda(K)=  \int H(\mu_{g^{-1}Kg}^{n-1})~d\lambda(K)  = \int H(\mu_{K}^{n-1})~d\lambda(K).
  \end{eqnarray*}
So (\ref{eqn:start}) implies
$$\int H(\alpha^K_1|\eta^K_n) ~d\lambda(K) = \int \left(H(\mu_K) - H(\mu^n_K) + H(\mu_{K}^{n-1})\right)~d\lambda(K).$$
\end{proof}

Let $\eta^K$ be the limit of $\eta^K_n$ (so a set $E$ is in the $\sigma$-algebra generated by $\eta^K$ iff for every $n$ it is in the $\sigma$-algebra generated by $\eta^K_n$). 

\begin{lem}\label{lem:11}
The sequence $\int \left(H(\mu^n_K) - H(\mu^{n-1}_K)\right) ~d\lambda(K)$ is monotone decreasing in $n$. Therefore, 
\begin{eqnarray*}
\lim_{n\to\infty} \frac{1}{n} \int H(\mu^n_K)~d\lambda(K) &=&\inf_{n\to\infty} \frac{1}{n} \int H(\mu^n_K)~d\lambda(K)\\
= \lim_{n\to\infty} \int \left(H(\mu^n_K) - H(\mu^{n-1}_K)\right) ~d\lambda(K)&=& \inf_{n\to\infty} \int \left(H(\mu^n_K) - H(\mu^{n-1}_K)\right) ~d\lambda(K)\\
 &=&\int \left(H(\mu_K) - H(\alpha^K_1|\eta^K)\right)~d\lambda(K).
\end{eqnarray*}
\end{lem}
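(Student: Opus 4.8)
The plan is to prove the monotonicity statement first and then harvest all the equalities from it by standard subadditivity arguments. Write $a_n := \int H(\mu^n_K) - H(\mu^{n-1}_K)~d\lambda(K)$ for $n\ge 1$ (with the convention $H(\mu^0_K)=0$, so $a_1 = \int H(\mu_K)~d\lambda(K)$). The core claim is that $a_n$ is nonincreasing in $n$. My approach is to recognize $a_n$ as a conditional entropy and exploit the elementary fact that conditioning on a larger sigma-algebra decreases entropy. Concretely, by the previous lemma I have $\int H(\alpha^K_1|\eta^K_n)~d\lambda(K) = \int H(\mu_K)~d\lambda(K) - a_n$, so proving $a_n \ge a_{n+1}$ is equivalent to proving that $\int H(\alpha^K_1|\eta^K_n)~d\lambda(K)$ is nondecreasing in $n$.

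The key observation driving the monotonicity is that the tail partitions $\eta^K_n$ are \emph{refining} as $n$ grows in the appropriate sense: $\eta^K_n$ records the coordinates $y_i$ for all $i\ge n$, so as $n$ increases we condition on fewer coordinates, i.e. $\eta^K_{n+1}$ is coarser than $\eta^K_n$. Since $H(\alpha\,|\,\beta)$ decreases when $\beta$ is refined, conditioning on the finer partition $\eta^K_n$ gives a \emph{smaller} value than conditioning on the coarser $\eta^K_{n+1}$; hence $H(\alpha^K_1|\eta^K_n) \le H(\alpha^K_1|\eta^K_{n+1})$ pointwise in $K$, and integrating against $\lambda$ preserves the inequality. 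This yields $a_n \ge a_{n+1}$, the asserted monotonicity. I would state the monotonicity of conditional entropy under refinement of the conditioning partition as the one external fact being invoked, and confirm the refinement direction carefully since the indexing (coordinates $\ge n$) runs opposite to the usual increasing-filtration intuition.

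With monotonicity in hand, the chain of equalities follows from routine Cesàro and telescoping arguments. The partial sums telescope: $\sum_{k=1}^n a_k = \int H(\mu^n_K)~d\lambda(K)$, so $\frac1n \int H(\mu^n_K)~d\lambda(K)$ is the Cesàro average of the sequence $\{a_k\}$. Because $\{a_k\}$ is monotone decreasing and bounded below by $0$, it converges to its infimum $\lim_{n\to\infty} a_n = \inf_n a_n$, and a monotone sequence and its Cesàro averages share the same limit; moreover for a decreasing sequence the Cesàro average also decreases to that common limit, giving the two ``$\lim = \inf$'' identities simultaneously. The final equality $\inf_n a_n = \int H(\mu_K) - H(\alpha^K_1|\eta^K)~d\lambda(K)$ comes from taking $n\to\infty$ in the previous lemma's formula and identifying the limit of the refining conditional entropies $H(\alpha^K_1|\eta^K_n)$ with $H(\alpha^K_1|\eta^K)$, where $\eta^K := \bigcap_n \eta^K_n$ is the tail partition; this uses the martingale/continuity theorem for conditional entropy under an increasing limit of conditioning partitions.

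The main obstacle I anticipate is the very last step: justifying $\lim_{n\to\infty} H(\alpha^K_1|\eta^K_n) = H(\alpha^K_1|\eta^K)$ and interchanging this limit with $\int\,d\lambda(K)$. The pointwise convergence is the increasing-martingale convergence theorem for conditional entropy (the conditioning partitions $\eta^K_n$ increase to the tail partition $\eta^K$), but passing it under the integral requires a domination or monotonicity argument over the fibers $K$. Here the monotonicity already established is exactly what rescues the interchange: since $H(\alpha^K_1|\eta^K_n)$ is monotone in $n$ for each $K$ and uniformly bounded by $H(\mu_K)$ whose integral is finite (it equals $a_1 = \int H(\mu_K)\,d\lambda(K) \le h_{max}(\mu) < \infty$), the monotone convergence theorem applies directly. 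Everything else is bookkeeping with the telescoping and Cesàro identities.
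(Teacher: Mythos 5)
Your proposal is correct and follows essentially the same route as the paper: identify $a_n$ with $\int H(\mu_K)-H(\alpha^K_1|\eta^K_n)\,d\lambda$ via the preceding lemma, get monotonicity from refinement of the conditioning partitions, and finish with the telescoping/Ces\`aro identity and the passage to the tail partition $\eta^K$. The only nit is terminological: the $\sigma$-algebras generated by $\eta^K_n$ \emph{decrease} to the tail (it is the conditional entropies that increase), so the last step is a reverse-martingale limit rather than an ``increasing'' one, but your monotone-convergence justification for interchanging the limit with $\int d\lambda$ is sound and is in fact more careful than the paper's.
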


\begin{proof} 
Since $\eta^K_{n-1}$ refines $\eta^K_{n}$ we have $H(\alpha^K_1|\eta^K_n) \ge H(\alpha^K_1|\eta^K_{n-1})$. So the previous lemma implies $\int H(\mu^n_K) - H(\mu^{n-1}_K) ~d\lambda(K)$ is monotone decreasing in $n$. It is also bounded by $H(\mu)$. So,
\begin{eqnarray*}
\lim_{n\to\infty}\frac{1}{n} \int H(\mu_K^n)~d\lambda(K) &=&  \lim_{n\to\infty} \frac{1}{n} \sum_{m=1}^n \int \left(H(\mu_K^m) - H(\mu_K^{m-1})\right)~d\lambda(K)\\
 &=& \lim_{n\to\infty} \int \left(H(\mu_K^n) - H(\mu_K^{n-1})\right)~d\lambda(K)\\
&=& \lim_{n\to\infty} \int \left(H(\mu_K) - H(\alpha^K_1|\eta^K_n)\right) ~d\lambda(K)\\
&=& \int \left(H(\mu_K) - H(\alpha^K_1|\eta^K)\right)~d\lambda(K).
\end{eqnarray*}
The third line follows from the previous lemma.

Because $\int \left(H(\mu^n_K) - H(\mu^{n-1}_K)\right) ~d\lambda(K)$ is monotone decreasing, it follows that
\begin{eqnarray*}
\frac{1}{n} \int H(\mu_K^n)~d\lambda(K) &=& \frac{1}{n} \sum_{m=1}^n \int \left(H(\mu_K^m) - H(\mu_K^{m-1})\right)~d\lambda(K)\\
&\ge&  \int \left(H(\mu_K^n) - H(\mu_K^{n-1})\right)~d\lambda(K).
\end{eqnarray*}
Therefore,
$$\lim_{n\to\infty} \frac{1}{n} \int H(\mu^n_K)~d\lambda(K) = \inf_n \int \left(H(\mu_K^n) - H(\mu_K^{n-1})\right)~d\lambda(K) = \inf_n \frac{1}{n} \int H(\mu_K^n)~d\lambda(K).$$
 \end{proof}

\begin{lem}\label{lem:12}
For any $K \in \Sub_G$,
$$H(\mu_K) - H(\alpha^K_1|\eta^K)= \sum_{Kg\in K\backslash G} \mu_K(Kg) \int \log\left(\frac{d\nu_{Kg}}{d\nu_K}(b)\right) ~d\nu_{Kg}(b).$$
\end{lem}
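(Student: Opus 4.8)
The plan is to rewrite the quantity $H(\mu_K)-H(\alpha_1^K\mid\eta^K)$ by passing to the limit $n\to\infty$ in the explicit formula already obtained in the proof of the previous lemma,
$$\P_K(\alpha^K_1(y)\mid\eta^K_n(y))=\frac{\mu_K(y_1)\,\mu^{n-1}_{y_1}(y_n)}{\mu_K^n(y_n)}.$$
The partitions $\eta_n^K$ generate a \emph{decreasing} sequence of $\sigma$-algebras whose intersection is (by definition of $\eta^K$) the tail $\sigma$-algebra, so the reverse martingale convergence theorem shows that the left-hand side converges $\P_K$-almost everywhere to $\P_K(\alpha_1^K(y)\mid\eta^K(y))$.

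The heart of the matter is to identify this limit as
$$\P_K(\alpha_1^K(y)\mid\eta^K(y))=\mu_K(y_1)\,\frac{d\nu_{y_1}}{d\nu_K}(\bnd_K(y))\qquad \P_K\text{-a.e.}$$
To set this up I would first record the one-step harmonicity of $\nu_K$. Conditioning $\P_K$ on its first coordinate $y_1=Kg$, the shifted sequence $\sigma_K y$ has law $\P_{Kg}$ (the increments beyond the first remain i.i.d.\ $\mu$ and are independent of $y_1$), and since $\bnd_K$ is shift-invariant, $\bnd_K(y)=\bnd_K(\sigma_K y)$. Hence the joint law of $(y_1,\bnd_K(y))$ under $\P_K$ is $\sum_{Kg}\mu_K(Kg)\,\delta_{Kg}\otimes\nu_{Kg}$, whose $B_K$-marginal is $\nu_K$; this is exactly the relation $\nu_K=\sum_{Kg}\mu_K(Kg)\nu_{Kg}$, which in particular gives $\nu_{Kg}\ll\nu_K$ whenever $\mu_K(Kg)>0$ so that each derivative $\frac{d\nu_{Kg}}{d\nu_K}$ is defined. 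Disintegrating this joint law over $\nu_K$ then shows that the conditional law of $y_1$ given $\sigma(\bnd_K)$ is $\mu_K(Kg)\frac{d\nu_{Kg}}{d\nu_K}(b)$.

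The remaining point is to upgrade conditioning on $\sigma(\bnd_K)$ to conditioning on the tail $\eta^K$. Since every shift-invariant set lies in $\sigma(y_n,y_{n+1},\dots)$ for all $n$, we have $\sigma(\bnd_K)\subseteq\eta^K$, and the two conditionings agree precisely when $y_1$ is conditionally independent of $\eta^K$ given $\bnd_K(y)$, equivalently when the tail and shift-invariant $\sigma$-algebras coincide up to $\P_K$-null sets. \textbf{This coincidence is the main obstacle.} It is the identification of the tail boundary with the Poisson boundary, and it is equivalently encoded by the assertion that the ratio $\frac{\mu^{n-1}_{y_1}(y_n)}{\mu^n_K(y_n)}$ of transition probabilities along the trajectory converges $\P_K$-a.e.\ to $\frac{d\nu_{y_1}}{d\nu_K}(\bnd_K(y))$. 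I would establish the displayed identity by matching the reverse-martingale limit of the left-hand side against this convergence of the Radon--Nikodym (Poisson) kernel, which is the one genuinely analytic input and follows from the mutual absolute continuity of the measures $\nu_{Kg}$ together with a martingale argument for the Poisson kernel.

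Granting the conditional-law identity, the rest is routine. Writing
$$-\log\P_K(\alpha_1^K(y)\mid\eta^K(y))=-\log\mu_K(y_1)-\log\frac{d\nu_{y_1}}{d\nu_K}(\bnd_K(y))$$
and integrating against $\P_K$, the first term contributes $-\int\log\mu_K(y_1)\,d\P_K(y)=H(\mu_K)$ because $y_1$ has law $\mu_K$, and this integral is finite since $H(\alpha_1^K\mid\eta^K)\le H(\mu)<\infty$. Therefore
$$H(\mu_K)-H(\alpha_1^K\mid\eta^K)=\int\log\frac{d\nu_{y_1}}{d\nu_K}(\bnd_K(y))\,d\P_K(y).$$
Finally, grouping this integral according to the value $Kg$ of $y_1$ and using that, conditioned on $y_1=Kg$, the boundary point $\bnd_K(y)$ has law $\nu_{Kg}$, the right-hand side becomes $\sum_{Kg\in K\backslash G}\mu_K(Kg)\int\log\!\left(\frac{d\nu_{Kg}}{d\nu_K}(b)\right)d\nu_{Kg}(b)$, which is the formula of the lemma.
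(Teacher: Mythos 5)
Your proof follows essentially the same route as the paper's: compute the conditional law of the boundary point given the first step, apply Bayes to get $\P_K(\alpha_1^K(y)\mid\eta^K(y)) = \P_K(\alpha_1^K(y))\,\frac{d\nu_{y_1}}{d\nu_K}(\bnd_K(y))$, cancel $H(\mu_K)$, and decompose the resulting integral according to the value of $y_1$. The one point where you go beyond the paper --- explicitly flagging that $\eta^K$ is the \emph{tail} $\sigma$-algebra while the Bayes computation a priori only conditions on $\sigma(\bnd_K)$, and indicating that the gap is closed by matching the reverse-martingale limit of the ratios $\mu^{n-1}_{y_1}(y_n)/\mu^n_K(y_n)$ with the Poisson kernel --- identifies a genuine subtlety that the paper's own proof passes over silently with the word ``Therefore,'' and your sketched resolution is the standard Kaimanovich--Vershik argument, so the proposal is correct.
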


\begin{proof}
For any Borel $E \subset B_K$ and any $y \in (K\backslash G)^{\N}$,
$$\P_K\left( \{y'  \in (K\backslash G)^{\N}:~ \bnd(y') \in E\}|~ \alpha_1^K(y)\right) = \nu_{y_1}(E) = \int_E \frac{d\nu_{y_1}}{d\nu_K}(b)~d\nu_K(b).$$
Therefore,
$$\P_K(\alpha^K_1(y)|~\eta^K(y)) = \P_K(\alpha_1^K(y))\frac{d\nu_{y_1}}{d\nu_K}(\bnd(y))$$
for $\P_K$ a.e. $y$. We now have:
\begin{eqnarray*}
H(\mu_K) - H(\alpha^K_1|\eta^K)&=& H(\mu_K) + \int \log\left(\P_K(\alpha^K_1(y)|~\eta^K(y))\right)~d\P_K(y) \\
&=& H(\mu_K) + \int \log\left(\P_K(\alpha_1^K(y))\frac{d\nu_{y_1}}{d\nu_K}(\bnd(y))\right) ~d\P_K(y)\\
&=& \int \log\left(\frac{d\nu_{y_1}}{d\nu_K}(\bnd(y))\right) ~d\P_K(y)\\
&=& \sum_{Kg\in K\backslash G} \int_{\{y:~y_1=Kg\}} \log\left(\frac{d\nu_{Kg}}{d\nu_K}(\bnd(y))\right) ~d\P_K(y)\\
&=& \sum_{Kg\in K\backslash G} \mu_K(Kg) \int \log\left(\frac{d\nu_{Kg}}{d\nu_K}(b)\right) ~d\nu_{Kg}(b).
\end{eqnarray*}
\end{proof}

\begin{lem}\label{lem2}
For any $K \in \Sub_G$, Borel set $E \subset B_K$ and $\gamma \in G$,
$$\nu_K(E) = \nu_{\gamma K}(\gamma E).$$
\end{lem}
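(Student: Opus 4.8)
The plan is to lift the identity to the level of the sequence spaces $G^{\N+1}$ and $(K\backslash G)^{\N+1}$, where the walk measures are given by explicit pushforwards, and then to descend through the boundary factor map $\bnd$. First I would unwind the notation: here $\nu_{\gamma K}$ is the boundary measure $(\bnd_{K^\gamma})_*\P_{\gamma K}$ attached to the coset $\gamma K = K^\gamma \gamma$ of the conjugate subgroup $K^\gamma = \gamma K \gamma^{-1}$, and $\gamma$ is the map $B_K \to B_{K^\gamma}$ induced by the diagonal left-multiplication $\gamma\cdot(Kg_0,Kg_1,\ldots) = (\gamma K g_0,\ldots) \in (K^\gamma\backslash G)^{\N+1}$ from \S\ref{sec:act1}. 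Since that map is a bijection with inverse $\gamma^{-1}\cdot$, the claim $\nu_K(E)=\nu_{\gamma K}(\gamma E)$ is equivalent to $(\gamma\cdot)_*\nu_K = \nu_{\gamma K}$, and this is what I would prove.

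The key computation is that diagonal left-multiplication on $G^{\N+1}$ intertwines the walk measures: $(\gamma\cdot)_*\P_g = \P_{\gamma g}$ for every $g\in G$. I would verify this directly from $\P_g = m_*(\delta_g\times\mu^\N)$ by introducing the map $P_\gamma(g_0,g_1,g_2,\ldots)=(\gamma g_0,g_1,g_2,\ldots)$ that multiplies only the zeroth coordinate, so that $(P_\gamma)_*(\delta_g\times\mu^\N)=\delta_{\gamma g}\times\mu^\N$, and then checking the elementary identity $m\circ P_\gamma = (\gamma\cdot)\circ m$, which holds because $m(\gamma g_0,g_1,\ldots)=(\gamma g_0,\gamma g_0g_1,\ldots)=\gamma\cdot m(g_0,g_1,\ldots)$. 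Specializing to $g=e$ gives $(\gamma\cdot)_*\P_e=\P_\gamma$.

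Next I would combine this with the commutation $(\gamma\cdot)\circ\pi_K = \pi_{K^\gamma}\circ(\gamma\cdot)$, which follows from $\gamma K g_i=(\gamma K\gamma^{-1})\gamma g_i = K^\gamma\gamma g_i$. Then
\begin{eqnarray*}
(\gamma\cdot)_*\P_K &=& (\gamma\cdot)_*(\pi_K)_*\P_e = (\pi_{K^\gamma})_*(\gamma\cdot)_*\P_e \\
&=& (\pi_{K^\gamma})_*\P_\gamma = \P_{K^\gamma\gamma} = \P_{\gamma K},
\end{eqnarray*}
where the penultimate equality is the definition $\P_{K^\gamma\gamma}=(\pi_{K^\gamma})_*\P_\gamma$ and the last uses $K^\gamma\gamma=\gamma K$. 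Finally, since $\gamma\cdot$ commutes with the shift $\sigma_K$ and hence satisfies $\bnd_{K^\gamma}\circ(\gamma\cdot)=(\gamma\cdot)\circ\bnd_K$, I would push the display forward through $\bnd$ to get $(\gamma\cdot)_*\nu_K = (\gamma\cdot)_*(\bnd_K)_*\P_K = (\bnd_{K^\gamma})_*\P_{\gamma K} = \nu_{\gamma K}$, and reading this off at $\gamma E$ yields $\nu_{\gamma K}(\gamma E)=\nu_K(E)$.

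I do not anticipate a genuine obstacle here: every ingredient is either a definition or an elementary equivariance of the multiplication map $m$. The only point requiring care is bookkeeping—keeping straight that the conjugate subgroup is $K^\gamma=\gamma K\gamma^{-1}$ while the basepoint coset transforms as $K\mapsto K^\gamma\gamma=\gamma K$, so that the target boundary is $B_{K^\gamma}$ rather than $B_K$ and $\gamma$ is a map between two distinct spaces, not a self-map. Confirming that $\gamma\cdot$ genuinely descends to the boundary (already asserted in \S\ref{sec:act1} via commutation with the shift) closes the argument.
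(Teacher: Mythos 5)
Your proof is correct and is precisely the definition-unwinding that the paper dismisses with ``The proof is immediate'': the identities $(\gamma\cdot)_*\P_g=\P_{\gamma g}$, $(\gamma\cdot)\circ\pi_K=\pi_{K^\gamma}\circ(\gamma\cdot)$, and equivariance of $\bnd$ are exactly the ingredients the author is taking for granted. No discrepancy with the paper's (implicit) argument, and your care with the distinction between $K^\gamma$ and the coset $\gamma K=K^\gamma\gamma$ is the one bookkeeping point worth writing down.
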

\begin{proof}
The proof is immediate.
\end{proof}

\begin{lem}\label{lem:rn}
For $\gamma\in G$ and $\xi \in B_K \subset {B(\Sub_G)}$,
$$\frac{d\nu_\lambda\circ \gamma^{-1}}{d\nu_\lambda}(\xi) = \frac{d\nu_{K\gamma}}{d\nu_K}(\xi).$$
\end{lem}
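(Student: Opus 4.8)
The plan is to read $\nu_\lambda\circ\gamma^{-1}$ as the pushforward $\gamma_*\nu_\lambda$ of $\nu_\lambda$ under the $G$-action on $B(\Sub_G)$ (indeed $(\nu_\lambda\circ\gamma^{-1})(A)=\nu_\lambda(\{x:\gamma x\in A\})=\gamma_*\nu_\lambda(A)$), and then to show that this pushforward disintegrates over the base $\Sub_G$ with the \emph{same} marginal $\lambda$ but with fiber $\nu_{K\gamma}$ in place of $\nu_K$. Once this is established the stated Radon-Nikodym derivative is just the fiberwise density $d\nu_{K\gamma}/d\nu_K$, because two measures with a common base marginal and mutually absolutely continuous fiber measures have Radon-Nikodym derivative equal to the fiberwise one.

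Concretely, I would first test $\gamma_*\nu_\lambda$ against a bounded Borel function $f$ on $B(\Sub_G)$. Using $d\nu_\lambda(K;\xi)=d\nu_K(\xi)\,d\lambda(K)$ and the action formula $\gamma(K,\xi)=(K^\gamma,\gamma\cdot\xi)$ this gives
$$\int f\,d(\gamma_*\nu_\lambda)=\int_{\Sub_G}\int_{B_K} f(K^\gamma,\gamma\cdot\xi)\,d\nu_K(\xi)\,d\lambda(K).$$
The inner integral is handled by Lemma \ref{lem2}: the identity $\nu_K(E)=\nu_{\gamma K}(\gamma E)$ says precisely that $\gamma$ pushes $\nu_K$ on $B_K$ forward to $\nu_{\gamma K}$ on $B_{K^\gamma}$, so that $\int_{B_K} f(K^\gamma,\gamma\cdot\xi)\,d\nu_K(\xi)=\int_{B_{K^\gamma}} f(K^\gamma,\eta)\,d\nu_{\gamma K}(\eta)$.

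Next I would re-index the outer integral by the conjugation map $K\mapsto L:=K^\gamma=\gamma K\gamma^{-1}$. Since $\lambda$ is conjugation-invariant this substitution preserves $\lambda$, and the crucial algebraic point is the coset identity $\gamma K=K^\gamma\gamma=L\gamma$, which turns $\nu_{\gamma K}$ into $\nu_{L\gamma}$ on $B_L$. This yields
$$\int f\,d(\gamma_*\nu_\lambda)=\int_{\Sub_G}\int_{B_L} f(L,\eta)\,d\nu_{L\gamma}(\eta)\,d\lambda(L),$$
so $\gamma_*\nu_\lambda$ has fiber $\nu_{L\gamma}$ over $L$. Comparing with $d\nu_\lambda(L;\eta)=d\nu_L(\eta)\,d\lambda(L)$, and using that $\nu_{L\gamma}$ and $\nu_L$ are mutually absolutely continuous as boundary measures of the same walk on $L\backslash G$ from different starting cosets (a fact already invoked implicitly in Lemma \ref{lem:12}), gives the claimed formula after renaming $L,\eta$ to $K,\xi$.

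The test-function manipulations are routine. The main thing to get right is the bookkeeping distinguishing conjugation of the base subgroup from translation of the starting coset — in particular the identity $\gamma K=K^\gamma\gamma$, which is exactly what converts the conjugation $K\mapsto K^\gamma$ of the base point into a translation $K^\gamma\mapsto K^\gamma\gamma$ of the starting coset — together with the standard justification that, for two measures sharing a base marginal and carrying measurable fiberwise densities, the global Radon-Nikodym derivative is the fiberwise density.
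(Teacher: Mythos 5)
Your proposal is correct and follows essentially the same route as the paper: both compute $\nu_\lambda\circ\gamma^{-1}$ fiberwise via the disintegration over $\Sub_G$, apply Lemma \ref{lem2} to identify the pushed-forward fiber, perform the change of variables $L=K^\gamma$ using conjugation-invariance of $\lambda$ together with the identity $\gamma K=K^\gamma\gamma$, and read off the Radon--Nikodym derivative as the fiberwise density $d\nu_{L\gamma}/d\nu_L$. The only cosmetic difference is that you test against bounded Borel functions where the paper uses indicator functions of Borel sets.
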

\begin{proof}
For any Borel $E \subset {B(\Sub_G)}$,
\begin{eqnarray*}
\nu_\lambda\circ \gamma^{-1}(E) &=& \nu_\lambda(\gamma^{-1} E) = \int \nu_K(\gamma^{-1} E \cap B_K) ~d\lambda(K).
\end{eqnarray*}
By Lemma \ref{lem2}, 
$$\nu_K(\gamma^{-1} E \cap B_K)  = \nu_{\gamma K}(E \cap B_{K^\gamma }).$$
So
$$\nu_\lambda\circ \gamma^{-1}(E)= \int   \nu_{\gamma K}(E \cap B_{K^\gamma})  ~d\lambda(K).$$
Make the change of variable $L=K^\gamma$ and use the conjugation-invariance of $\lambda$ to obtain
$$\nu_\lambda\circ \gamma^{-1}(E)= \int  \nu_{L\gamma }(E \cap B_{L}) ~d\lambda(L).$$
In other words,
$$\nu_\lambda\circ \gamma^{-1}(E)= \iint \frac{d\nu_{K\gamma}}{d\nu_K}(\xi) 1_E(\xi)~d\nu_K(\xi) d\lambda(K).$$
This implies the lemma.
\end{proof}


\begin{proof}[Proof of Theorem \ref{thm:hard}]
By Lemmas \ref{lem:11}, \ref{lem:12} and \ref{lem:rn},
\begin{eqnarray*}
\lim_{n\to\infty} \frac{1}{n} \int H(\mu^n_K)~d\lambda(K) &=&\int H(\mu_K) - H(\alpha^K_1|\eta^K)~d\lambda(K)\\
&=& \int \sum_{Kg\in K\backslash G} \mu_K(Kg) \int \log\left(\frac{d\nu_{Kg}}{d\nu_K}(b)\right) ~d\nu_{Kg}(b)d\lambda(K)\\
&=& \sum_{g\in G} \mu(g) \iint \log\left(\frac{d\nu_{Kg}}{d\nu_K}(b)\right) \frac{d\nu_{Kg}}{d\nu_K}(b)~d\nu_K(b)d\lambda(K)\\
&=& \sum_{g\in G} \mu(g) \iint \log\left(\frac{d\nu_\lambda\circ g^{-1}}{d\nu_\lambda}(b)\right) \frac{d\nu_\lambda\circ g^{-1}}{d\nu_\lambda}(b)~d\nu_\lambda(b).
\end{eqnarray*}
The cocycle identity for the Radon-Nikodym derivative implies 
$$\frac{d\nu_\lambda\circ g^{-1}}{d\nu_\lambda}(b) = \frac{d\nu_\lambda}{d\nu_\lambda \circ g}(g^{-1}(b)).$$
By definition, we also have 
$$\frac{d\nu_\lambda\circ g^{-1}}{d\nu_\lambda}(b)~d\nu_\lambda(b) = d\nu_\lambda\circ g^{-1}(b).$$
Therefore,
\begin{eqnarray*}
\lim_{n\to\infty} \frac{1}{n} \int H(\mu^n_K)~d\lambda(K) &=& -\sum_{g\in G} \mu(g) \iint \log\left(\frac{d\nu_\lambda\circ g}{d\nu_\lambda}(g^{-1}b)\right) d\nu_\lambda(g^{-1}b)\\
&=&-\sum_{g\in G} \mu(g) \iint \log\left(\frac{d\nu_\lambda \circ g}{d\nu_\lambda}( b)\right) d\nu_\lambda(b)\\
&=&h_\mu(\lambda).
\end{eqnarray*}
The other equalities follow from Lemma \ref{lem:11}. 
\end{proof}


\section{Results for the free group}\label{sec:specific}

For the sake of simplicity, we specialize to the case $G=\langle a,b\rangle$, the rank $2$ free group although all the constructions easily generalize to any finitely generated free group.


Let $\Schreier(K\backslash G)=(V_K,E_K)$ be the Schreier coset graph of $K\backslash G$. The vertex set is $V_K := K\backslash G$. For each $Kg \in K \backslash G$ there are two directed labeled edges in the edge set, denoted by $E_K$. These are $(Kg,Kga)$ which is labeled $a$, and $(Kg,Kgb)$ which is labeled $b$. It is possible that $Kga=Kgb$ in which case there are two different edges from $Kg$ to $Kga=Kgb$. 

We say that $K\backslash G$ is {\em tree-like} if for every $Kg, Kg' \in K\backslash G$ there is a unique sequence of vertices $Kg=Kg_1 , Kg_2,\ldots, Kg_n = Kg'$ such that $Kg_i$ is adjacent to $Kg_{i+1}$ for $1\le i <n$ and $Kg_{i-1} \notin \{Kg_i, Kg_{i+1}\}$ for any $1 < i < n$. This does not mean that there is a unique path in the Schreier coset graph of $K \backslash G$ because it is possible, for example, that $Kga=Kgb$ for some coset $Kg$. Equivalently, $K\backslash G$ is treelike if it does not contain simple circuits of length greater than 2.

Let $\Tree_G \subset \Sub_G$ be the set of all subgroups $K \in \Sub_G$ such that $K \backslash G$ is tree-like. This is a closed $G$-invariant subspace. Let $\cM(\Tree_G) \subset \cM(\Sub_G)$ denote those measures with support contained in $\Tree_G$.

Let $\{X_i\}_{i=1}^\infty$ be i.i.d. random variables in $G$ with law $\mu$ (where $\mu$ is the uniform probability measure on $\{a,a^{-1},b,b^{-1}\}$). For $K \in \Sub_G$, let $R_n(\mu,K)$ be the probability that $KX_1\cdots X_n = K$ and let $R_{\ge n}(\mu,K)$ be the probability that $KX_1\cdots X_m=K$ for some $m\ge n$. A subset $\cN \subset \cM(\Sub_G)$ has {\em controlled return-time probabilities} if 
$$ \lim_{n \to \infty} \sup_{\eta \in \cN} \eta \Big( \{K \in \Sub_G:~R_{\ge n}(\mu,K) \ge \epsilon\} \Big) =0 \quad \forall \epsilon>0.$$

The next result plays a key role in the proof of Theorem \ref{thm:main}. It is proven in the next subsection.

\begin{thm}\label{thm:continuous}
If $\cN \subset \cM(\Tree_G)$ is a set of measures with controlled return-time probabilities then the entropy function $\lambda \in \cN \mapsto h_\mu(\lambda)$ is continuous on $\cN$ with respect to the weak* topology. \end{thm}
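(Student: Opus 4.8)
The plan is to use the variational formula of Theorem \ref{thm:hard}, which exhibits $h_\mu(B(\Sub_G),\nu_\lambda)$ as a decreasing limit of weak*-continuous functions, and then to use the controlled-return-time hypothesis to upgrade this to \emph{uniform} convergence on $\cN$, from which continuity is immediate. Set $f_n(K):=H(\mu^n_K)-H(\mu^{n-1}_K)$ and $F_n(\lambda):=\int f_n(K)\,d\lambda(K)$. By Lemma \ref{lem:11} the integrand satisfies $0\le f_n(K)\le H(\mu)$ and is pointwise decreasing in $n$, and Theorem \ref{thm:hard} gives $h_\mu(B(\Sub_G),\nu_\lambda)=\inf_n F_n(\lambda)=\lim_n F_n(\lambda)$. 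The first step is to observe that each $F_n$ is weak*-continuous on all of $\cM(\Sub_G)$: the value $H(\mu^m_K)$ depends only on the rooted, labelled ball of radius $m$ about the base vertex $K$ in $\Schreier(K\backslash G)$, since $\mu^m_K$ is supported on cosets within distance $m$ of $K$ and its weights are determined by the labelled walks of length $m$ in that ball. This ball is determined by which of the finitely many elements $uv^{-1}$ (with $u,v$ words of length $\le m$) lie in $K$, a condition that is locally constant in the Chabauty topology on $\Sub_G$. Hence $K\mapsto f_n(K)$ is bounded and locally constant, so it is continuous on the compact space $\Sub_G$ and $F_n$ is weak*-continuous. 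Consequently $h_\mu=\inf_n F_n$ is an infimum of continuous functions, hence automatically upper semicontinuous on $\cN$.

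The second step reduces the remaining (lower) semicontinuity to uniform convergence. Since the $F_n$ are continuous and decrease to $h_\mu$, it suffices to show $\sup_{\lambda\in\cN}\big(F_n(\lambda)-h_\mu(B(\Sub_G),\nu_\lambda)\big)\to 0$, because a uniform limit of continuous functions is continuous. Writing $g_n(K):=f_n(K)-h(K)\ge 0$, where $h(K)=\lim_m f_m(K)=H(\mu_K)-H(\alpha^K_1|\eta^K)$ is the per-subgroup random walk entropy, the quantity to control is $\sup_{\lambda}\int g_n\,d\lambda$. I would prove a pointwise bound of the form $g_n(K)\le \psi\big(R_{\ge n}(\mu,K)\big)$ valid for all tree-like $K$, where $\psi$ is increasing near $0$ with $\psi(\epsilon)\to 0$ as $\epsilon\to 0^+$. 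Granting this, for any $\epsilon>0$ I split
\[
\int g_n\,d\lambda\;\le\;\psi(\epsilon)\;+\;H(\mu)\cdot\lambda\big(\{K:R_{\ge n}(\mu,K)\ge\epsilon\}\big),
\]
take the supremum over $\lambda\in\cN$ and then $\limsup_{n\to\infty}$; the controlled-return-time hypothesis sends the second term to $0$, and finally letting $\epsilon\to 0^+$ kills $\psi(\epsilon)$. This yields the required uniform convergence.

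The third step is the crux, and is where I expect the main difficulty. By the identity in the proof of Lemma \ref{lem:11}, $g_n(K)=H(\alpha^K_1|\eta^K)-H(\alpha^K_1|\eta^K_n)$, which (as $\eta^K\subseteq\eta^K_n$) is the conditional mutual information between the first step of the walk and its tail-from-time-$n$ given the full tail. Using the Markov property of the coset walk under $\P_K$ one may replace $\eta^K_n$ by the single coordinate $y_n$, so that $g_n(K)=H(\alpha^K_1|\eta^K)-H(\alpha^K_1|y_n)$. The tree-like structure is what ties this to returns: in $\Schreier(K\backslash G)$ the first step $\alpha^K_1$ records which branch at the root the walk initially enters, and on the event that the walk does not revisit the root at any time $\ge n$ the position $y_n$ lies in the same branch as the limiting boundary point, so the information about $\alpha^K_1$ carried by $y_n$ beyond that carried by the tail is concentrated on the complementary event $\{y_m=K\text{ for some }m\ge n\}$, of probability $R_{\ge n}(\mu,K)$. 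Converting this into a clean inequality — conditioning on the indicator of that event (contributing at most the two-point entropy $H(R_{\ge n},1-R_{\ge n})+R_{\ge n}H(\mu)$) and showing the residual term on the no-return event is controlled uniformly in $K$ — is the technical heart, and the step I expect to require the most care, since one must rule out a genuine contribution to the gap from trajectories that never return after time $n$.

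Finally, combining the weak*-continuity of each $F_n$ established in the first step with the uniform convergence $F_n\to h_\mu$ on $\cN$ established via the second and third steps shows that $\lambda\mapsto h_\mu(B(\Sub_G),\nu_\lambda)$ is continuous on $\cN$, which is the assertion of the theorem.
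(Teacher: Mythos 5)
Your overall architecture (write the entropy as a decreasing limit of weak*-continuous functionals $F_n$, then use the controlled-return-time hypothesis to upgrade pointwise convergence to uniform convergence on $\cN$) is sound in outline, and your first step --- that $K\mapsto H(\mu^n_K)$ is locally constant in the Chabauty topology, so each $F_n$ is weak*-continuous --- is correct. But the proof has a genuine gap exactly where you flag it: the pointwise bound $g_n(K)\le\psi\bigl(R_{\ge n}(\mu,K)\bigr)$ is asserted, not proved, and it is the entire content of the theorem. The quantity you need to control is the conditional mutual information $I(y_1;y_n\mid\text{tail})$, and your sketch (condition on the indicator of a return after time $n$, then argue the residual on the no-return event vanishes) does not close: even on the event that the walk never revisits $K$ after time $n$, the position $y_n$ is a finer datum than the tail $\sigma$-algebra, and you give no argument that it carries no extra information about $y_1$. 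You would essentially have to prove that, conditioned on the boundary point, $y_1$ and $y_n$ are asymptotically independent with a rate governed only by $R_{\ge n}$, uniformly over all tree-like $K$; nothing in your write-up establishes this. There is also a smaller inaccuracy: the identity $\int f_n\,d\lambda=\int\bigl(H(\mu_K)-H(\alpha^K_1|\eta^K_n)\bigr)\,d\lambda$ holds only after integrating against a conjugation-invariant $\lambda$ (the pointwise version involves $\sum_g\mu(g)H(\mu^{n-1}_{Kg})$, not $H(\mu^{n-1}_K)$), so your pointwise definition $g_n(K)=f_n(K)-h(K)$ does not equal the conditional-entropy gap $H(\alpha^K_1|\eta^K_n)-H(\alpha^K_1|\eta^K)$ subgroup by subgroup, and the claimed pointwise monotonicity and nonnegativity of $f_n$ are likewise only established in integrated form.

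The paper takes a different and more concrete route that avoids this difficulty entirely. Using the tree-like hypothesis, Lemma \ref{lem:rn2} computes the Radon--Nikodym cocycle explicitly: on each shadow-boundary piece $B_{Ks}$ the derivative $\tfrac{d\nu_\lambda\circ t}{d\nu_\lambda}$ is the constant $\nu_{Kt^{-1}}(B_{Ks})/\nu_K(B_{Ks})$, and Lemma \ref{lem:rn3} bounds this ratio in $[1/4,4]$. The Furstenberg entropy then becomes a finite sum of terms $-\mu(t)\,\nu_K(B_{Ks})\log\bigl(\nu_{Kt^{-1}}(B_{Ks})/\nu_K(B_{Ks})\bigr)$ integrated against $\lambda$, and the only quantities to approximate are the shadow-hitting probabilities $\nu_K(B_{Ks})$, which are replaced by $\Pr\bigl(Z_n(K)\in\shadow(Ks)\bigr)$ --- manifestly continuous in $K$. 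The error in that replacement is controlled \emph{exactly} by the event that the walk returns to the root after time $n$, which is where the controlled-return-time hypothesis enters cleanly. If you want to salvage your approach, the missing inequality is where all the work lies, and you should expect to need the shadow/Radon--Nikodym analysis (or something equivalent) to prove it.
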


\subsection{A continuity criterion }

If $K \in \Tree_G$ (so $K \backslash G$ is tree-like), then for each $g \in G$, let the {\em shadow} of $Kg$, denoted $\shadow(Kg)$, be the set of all cosets $K\gamma \in K \backslash G$ so that every path in $\Schreier(K\backslash G)$ from $K$ to $K\gamma$ passes through $Kg$. Let $\shadow_{\N_{\ge 1}}(Kg)$ be the set of all sequences $(Kg_0, Kg_1, \ldots) \in (K\backslash G)^{\N}$ that are eventually in $\shadow(Kg)$ in the sense that there exists an $N$ so that if $n\ge N$ then $Kg_n \in \shadow(Kg)$. Let $B_{Kg} := \pi_K(\shadow_{\N_{\ge 1}}(Kg))$ be the projection of $\shadow_{\N_{\ge 1}}(Kg)$ to the boundary $B_K \subset B(\Sub_G)$. 



\begin{lem}\label{lem:rn2}
Let $\lambda \in \cM(\Tree_G)$ and $s,t \in \{a,b,a^{-1},b^{-1}\}$.  Then for $\nu_\lambda$-a.e. $\xi$, if $\xi \in B_{Ks}$ then
$$\frac{d \nu_\lambda\circ t}{d\nu_\lambda}(\xi) = \frac{\nu_{Kt^{-1}}(B_{Ks})}{\nu_K(B_{Ks})}.$$
\end{lem}

\begin{proof}
If the random walk on $K\backslash G$ is recurrent then $B_K$ is trivial and the statement is obvious. So we will assume that for $\lambda$-a.e. $K$, $K\backslash G$ is transient. By Lemma \ref{lem:rn}, 
$$\frac{d \nu_\lambda \circ t}{d\nu_\lambda}(\xi) = \frac{d\nu_{Kt^{-1}}}{d\nu_K}(\xi) \quad \textrm{for a.e. } \xi \in B_K \subset B(\Sub_G).$$
Let $\{X_n(Kg):~ Kg \in K \backslash G, n\ge 1\}$
be an i.i.d. family of random variables with law $\mu$. Let $\{Z_n(Kg)\}_{n=1}^\infty$ be the random walk:
$$Z_n(Kg):=KgX_1(Kg)X_2(Kg)\cdots X_n(Kg)$$
and $Z_0(Kg):=Kg$. 

Recall that $\bnd_K$ denotes the projection from the space of sequences $(K\backslash G)^{\N}$ to the boundary $B_K$. Let $\zeta(Kg)=\bnd_K(\{Z_n(Kg)\}_{n=0}^\infty)$. 

Suppose that $Kt^{-1} \ne Ks$. Then any path in $\Schreier(K\backslash G)$ from $Kt^{-1}$ whose projection lies in $B_{Ks}$ necessarily passes through $K$. So for any Borel $E \subset B_{Ks}$ the probability that $\zeta(Kt^{-1}) \in E$ is
\begin{eqnarray*}
&&\nu_{Kt^{-1}}(E) = \Prob(\zeta(Kt^{-1}) \in E)\\
&=&\sum_{n=0}^\infty \Prob\left(Z_n(Kt^{-1}) = K,~  Z_m(Kt^{-1}) \ne K~\forall m>n, \zeta(Kt^{-1}) \in E\right)\\
 & = & \sum_{n=0}^\infty \Prob\left(Z_n(Kt^{-1}) = K\right) \cdot \Prob\left(\zeta(K) \in E,~ Z_t(K) \ne K ~\forall t>0\right)\\
 &=&  \E\left[ |\{ n \ge 0:~ Z_n(Kt^{-1}) =K \}|\right] \cdot \Prob\left(\zeta(K) \in E \textrm{ and } Z_t(K) \ne K~ \forall t>0\right)\\
  &=&  \E\left[ |\{ n \ge 0:~ Z_n(Kt^{-1}) =K \}|\right] \cdot \frac{\Prob (\zeta(K) \in E ) }{\sum_{t=0}^\infty \Prob(Z_t(K)= K)}\\
  &=&\nu_{K}(E) \frac{ \E\left[ |\{ n \ge 0:~ Z_n(Kt^{-1}) =K \}|\right] }{\E\left[ |\{ n \ge 0:~ Z_n(K) =K \}|\right] }.  
\end{eqnarray*}
Since this is true for every $E \subset B_{Ks}$ it follows that
$$ \frac{d\nu_{Kt^{-1}}}{d\nu_K}(\xi) = \frac{ \E\left[ |\{ n \ge 0:~ Z_n(Kt^{-1}) =K \}|\right] }{\E\left[ |\{ n \ge 0:~ Z_n(K) =K \}|\right] }.$$
Because we have assumed the random walk on $K\backslash G$ is transient, the expected values appearing in the formulae above are finite. This shows that, $\frac{d\nu_{Kt^{-1}}}{d\nu_K}(\xi) =\frac{ \nu_{Kt^{-1}}(E) }{ \nu_K(E)}$ for every measurable $E \subset B_{Ks}$ with positive measure. This proves the lemma in the case $Kt^{-1} \ne Ks$.

Suppose now that $Kt^{-1}=Ks$. Then any path in $\Schreier(K \backslash G)$ from $K$ which projects into $B_{Ks}$ must pass through $Ks$. So for any Borel subset $E \subset B_{Ks}$, 
\begin{eqnarray*}
\nu_{K}(E) &=& \Prob(\zeta(K) \in E)\\
 & = &\sum_{n=0}^\infty \Prob\left(Z_n(K) = Ks \right)\cdot \Prob\left(\zeta(Ks) \in E,~Z_r(Ks) \ne Ks ~\forall r>0\right)\\
 &=& \E\left[ |\{ n \ge 0:~ Z_n(K) =Ks \}|\right] \cdot \Prob \left(\zeta(Ks) \in E,~ Z_r(Ks) \ne Ks ~\forall r>0\right)\\
  &=& \E\left[ |\{ n \ge 0:~ Z_n(K) =Ks \}|\right] \frac{\Prob \left(\zeta(Ks) \in E \right) }{\sum_{n=0}^\infty \Prob(Z_n(Ks)=Ks)}\\
  &=& \nu_{Ks}(E) \frac{ \E\left[ |\{ n \ge 0:~ Z_n(K) =Ks \}|\right] }{\E\left[ |\{ n \ge 0:~ Z_n(Ks) =Ks \}|\right] }.  
\end{eqnarray*}
Since this is true for every $E \subset B_{Ks}$ it follows that
$$  \frac{d\nu_{Kt^{-1}}}{d\nu_K}(\xi) =\frac{d\nu_{Ks}}{d\nu_K}(\xi) =  \frac{ \E\left[ |\{ n \ge 0:~ Z_n(Ks) =Ks \}|\right] }{\E\left[ |\{ n \ge 0:~ Z_n(K) =Ks \}|\right] }.$$
In particular, $\frac{d\nu_{Kt^{-1}}}{d\nu_K}(\xi) =\frac{ \nu_{Kt^{-1}}(E) }{ \nu_K(E)}$ for every Borel $E \subset B_{Ks}$ with positive measure. This proves the lemma in the case $Kt^{-1} = Ks$.
\end{proof}

 
\begin{lem}\label{lem:rn3}
For any $s \in \{a,b,a^{-1},b^{-1}\}$ and $K \in \Tree_G$, 
$$1/4 \le\frac{d\nu_{Ks}}{d\nu_K}  \le 4$$
almost everywhere.
\end{lem}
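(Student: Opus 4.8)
The plan is to obtain both inequalities from a single elementary fact — the one-step harmonicity relation for the hitting measures $\nu_{Kg}$ — and to observe that tree-likeness of $K\backslash G$ is used only to make the shadow sets $B_{Ks}$ well-defined; the inequality itself holds for an arbitrary measurable target set in place of $B_{Ks}$. Throughout, $\nu_{Kt^{-1}}$ and $\nu_K$ are both measures on the \emph{same} boundary $B_K$ (the walk started from any coset of $K$ stays in $K\backslash G$), so comparing them on $B_{Ks}\subset B_K$ makes sense.

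First I would record the key identity: for every coset $Kg \in K\backslash G$,
$$\nu_{Kg} = \sum_{x\in\{a,a^{-1},b,b^{-1}\}} \mu(x)\,\nu_{Kgx} = \frac{1}{4}\sum_{x\in\{a,a^{-1},b,b^{-1}\}} \nu_{Kgx},$$
an identity of measures on $B_K$. This is just first-step analysis for the walk $Z_n(Kg)=KgX_1\cdots X_n$: conditioning on the first increment $X_1=x$ (each value having probability $\tfrac14$) puts the shifted walk $(Z_1,Z_2,\ldots)$ in the law $\P_{Kgx}$, and since $\bnd_K$ is measurable with respect to the shift-invariant sigma-algebra it depends only on this tail. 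Hence $(\bnd_K)_*\P_{Kg}$ is the $\tfrac14$-average of the measures $(\bnd_K)_*\P_{Kgx}=\nu_{Kgx}$. The only care needed is the standard verification that $\bnd_K$ factors through $\sigma_K$ and that the conditioned, shifted walk genuinely has law $\P_{Kgx}$.

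Granting the identity, the upper bound is immediate: applying it with $g=e$ and discarding all but the (nonnegative) term indexed by $x=t^{-1}$, which lies in $\{a,a^{-1},b,b^{-1}\}$, gives
$$\nu_K(B_{Ks}) = \frac{1}{4}\sum_{x} \nu_{Kx}(B_{Ks}) \ge \frac{1}{4}\,\nu_{Kt^{-1}}(B_{Ks}),$$
so $\nu_{Kt^{-1}}(B_{Ks})\le 4\,\nu_K(B_{Ks})$. The lower bound is the symmetric argument: applying the identity with $g=t^{-1}$ and keeping the term indexed by $x=t$, so that $Kt^{-1}x=K$, gives
$$\nu_{Kt^{-1}}(B_{Ks}) = \frac{1}{4}\sum_{x}\nu_{Kt^{-1}x}(B_{Ks}) \ge \frac{1}{4}\,\nu_K(B_{Ks}).$$
Combining the two yields $\tfrac14\,\nu_K(B_{Ks})\le \nu_{Kt^{-1}}(B_{Ks})\le 4\,\nu_K(B_{Ks})$, which is exactly $1/4\le \nu_{Kt^{-1}}(B_{Ks})/\nu_K(B_{Ks})\le 4$ whenever $\nu_K(B_{Ks})>0$.

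I do not expect a substantial obstacle here: all the content sits in the harmonicity identity, and the constants $1/4$ and $4$ are precisely $\mu(x)$ and $\mu(x)^{-1}$ for a single generator step, reflecting that $K$ and $Kt^{-1}$ are neighbors in $\Schreier(K\backslash G)$. The one point worth flagging is the degenerate case $\nu_K(B_{Ks})=0$, where the ratio is not literally defined; there the two displayed inequalities still hold and force $\nu_{Kt^{-1}}(B_{Ks})=0$ as well, so the statement should be read as the pair of inequalities above rather than as a statement about a genuine quotient.
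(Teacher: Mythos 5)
Your proof is correct and takes essentially the same route as the paper: the paper's argument is first-step analysis, bounding $\nu_{Kt^{-1}}(B_{Ks}) \ge \Prob\left(Z_1(Kt^{-1})=K\right)\cdot \Prob(\zeta(K)\in B_{Ks}) \ge \nu_K(B_{Ks})/4$ and declaring the reverse inequality similar. Your harmonicity identity $\nu_{Kg}=\tfrac14\sum_x \nu_{Kgx}$ is just that one-step decomposition packaged as an identity of measures, and your observations about tree-likeness being inessential here and about the degenerate case $\nu_K(B_{Ks})=0$ are correct but cosmetic.
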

\begin{proof}
For $n\ge 0$ and $g\in G$, define $Z_n(Kg)$ and $\zeta(Kg)$ as in the proof of the previous lemma. For any Borel set $E \subset B_K$,
\begin{eqnarray*}
\nu_{Ks}(E) = \Prob(\zeta(Ks) \in E) \ge \Prob\left(Z_1(Ks) = K\right)\cdot \Prob( \zeta(K) \in E) \ge  \nu_K(E) / 4.
\end{eqnarray*}
The other inequality is similar.
\end{proof}

\begin{proof}[Proof of Theorem \ref{thm:continuous}]
For $K \in \Sub_G$, let $S(K) = \{Ka, Ka^{-1}, Kb, Kb^{-1}\}$. Because it might occur that $Ka=Kb$, for example, it is possible that $|S(K)|<4$. By Lemma \ref{lem:rn2}, for any $\lambda \in \cM(\Tree_G)$,
\begin{eqnarray*}\label{eqn:thmc}
h_\mu(\lambda) &=& -\iint \log \frac{d\nu_\lambda \circ g}{d\nu_\lambda}(\xi)~d\nu_\lambda(\xi)~d\mu(g)\\
&=& -\iint \sum_{Ks \in S(K)}  \nu_K(B_{Ks}) \log \frac{\nu_{Kg^{-1}}(B_{Ks})}{\nu_K(B_{Ks})}~d\lambda(K)d\mu(g)\\
&=& -\int \sum_{t\in G} \sum_{Ks\in S(K)} \mu(t) \nu_K(B_{Ks}) \log \frac{\nu_{Kt^{-1}}(B_{Ks})}{\nu_K(B_{Ks})}~d\lambda(K).
\end{eqnarray*}
By the previous lemma there is a constant $C>0$ so that 
$$\left| \nu_K(B_{Ks}) \log \frac{\nu_{Kt^{-1}}(B_{Ks})}{\nu_K(B_{Ks})} \right| \le C$$
for all $K \in \Tree_G$ and $s,t \in \{e,a,b,a^{-1},b^{-1}\}$. For $x,y \in [0,1]$ let
\begin{displaymath}
F(x,y) := \left\{  \begin{array}{cc}
-C &  - x\log \frac{y}{x} \le -C\\
 - x\log \frac{y}{x} &  -C<- x\log \frac{y}{x} < C \\
C &  - x\log \frac{y}{x} \ge C
\end{array}\right.
\end{displaymath}
Also for $t \in \{a,b,a^{-1},b^{-1},e\}$, $K\in \Sub_G$ and $Ks \in S(K)$, let
\begin{eqnarray*}
\rho(K,Ks,t)&:=&\mu(t)F( \nu_{K}(B_{Ks}), \nu_{Kt^{-1}}(B_{Ks}) ).
\end{eqnarray*}
So the previous equation implies
 \begin{eqnarray}\label{eqn:thmc1}
h_\mu(\lambda)=  \int \sum_{t\in G}\sum_{Ks\in S(K)}   \rho(K,Ks,t)~d\lambda(K).
\end{eqnarray}

Define $Z_n(Kg)$ and $\zeta(Kg)$ as in Lemma \ref{lem:rn2}. For $n,\epsilon \ge 0$, $t \in \{a,b,a^{-1},b^{-1},e\}$, $K\in \Sub_G$ and $Ks \in S(K)$, let 
\begin{eqnarray*}
\rho_n(K,Ks,t)&:=& \mu(t) F\left(\Pr\left(Z_n(K) \in \shadow(Ks)\right), \Pr\left(Z_n(Kt^{-1}) \in \shadow(Ks)\right)\right).
\end{eqnarray*}

Note that $\rho_n(K,Ks,t)$ varies continuously with $K \in \Tree_G$ (for fixed $s,t,n$). Since we are using the weak* topology on $\cN \subset \cM(\Sub_G)$, it suffices to show that
$$\lim_{n\to\infty} \sup_{\lambda \in \cN} \left| h_\mu(B(\Sub_G), \nu_\lambda) -   \int  \sum_{t\in G} \sum_{Ks\in S(K)} \rho_n(K,Ks,t)~d\lambda(K)\right| = 0.$$
Let $S=\{a,b,a^{-1},b^{-1}\}$. By (\ref{eqn:thmc1}) it suffices to prove
$$\lim_{n\to\infty} \sup_{\lambda \in \cN}  \sum_{t\in S} \sum_{s\in S}  \int \left|\rho(K,Ks,t) -  \rho_n(K,Ks,t)\right|~d\lambda(K) = 0.$$
Let
$$X_{n,s,t}(\epsilon) := \{K \in \Sub_G:~\left|\rho(K,Ks,t) -  \rho_n(K,Ks,t)\right| < \epsilon\}.$$
Because $\left|\rho(K,Ks,t) -  \rho_n(K,Ks,t)\right|$ is bounded by $2C$, it suffices to show that
$$ \lim_{n\to\infty} \sup_{\lambda \in \cN} \lambda(X_{n,s,t}(\epsilon) ) = 1\quad \forall \epsilon>0, s,t\in S.$$
For $\delta>0$, let
$$Y_{n,s,t}(\delta) := \left\{K \in \Sub_G:~\left| \nu_{Kt^{-1}}(B_{Ks})  - \Pr\left(Z_n(Kt^{-1}) \in \shadow(Ks)\right)\right| < \delta \right\}.$$
Because $F$ is uniformly continuous on $[0,1] \times [0,1]$, it suffices to prove that
\begin{eqnarray}\label{eqn:thmc2}
 \lim_{n\to\infty} \sup_{\lambda \in \cN} \lambda(Y_{n,s,t}(\delta) ) = 1 \quad \forall \delta>0, s,t \in S.
\end{eqnarray}

Because $\cN$ has controlled return-time probabilities, for any $\delta>0$ there exists an $N=N(\delta)$ such that $n \ge N$ implies
$$\lambda \Big( \{K \in \Sub_G:~R_{\ge n}(\mu,K) \ge \delta\} \Big) < \delta~\forall \lambda \in \cN.$$
Equivalently,
$$\lambda \Big( \left\{K \in \Sub_G:~\Pr\left(Z_n(K) \ne K,~ \forall n \ge N\right)  \ge 1-\delta \right\} \Big) \ge 1- \delta~\forall \lambda \in \cN.$$

Because the support of $\mu$ is contained in $\{ a,b,a^{-1},b^{-1}\}$ if $Z_n(K) \ne K$ for any $n \ge N$ and $Z_n(K) \in \shadow(Ks)$ (for some $s \in \{a,b,a^{-1},b^{-1}\}$) then $\zeta(K) \in B_{Ks}$. Therefore, the equation above implies that for any $s\in \{a,b,a^{-1},b^{-1}\}$ and any $n\ge N$,
$$\lambda \Big( \left\{K \in \Sub_G:~ \left| \Pr(Z_n(K) \in \shadow(Ks)) - \Pr(\zeta(K) \in B_{Ks}) \right| \le \delta\right\} \Big) \ge 1- \delta,~\forall \lambda \in \cN.$$
Since $\Pr(\zeta(K)\in B_{Ks}) = \nu_K(B_{Ks})$, this equation is equivalent to
$$\lambda \Big( \left\{K \in \Sub_G:~ \left| \Pr\left(Z_n(K) \in \shadow(Ks)\right) -\nu_K(B_{Ks}) \right| \le \delta\right\} \Big) \ge 1- \delta,~\forall \lambda \in \cN.$$
This implies equation (\ref{eqn:thmc2})  for $t=e$. The other cases are similar.

\end{proof}


\subsection{A covering space construction}\label{sec:cover}

For $K \in \Tree_G$, let $X_K$ be the 2-complex whose 1-skeleton is the right-Schreier coset graph of $K\backslash G$ and whose 2-cells are all possible 1-gons and 2-gons. More precisely, for every loop in the Schreier coset graph, there is a 2-cell whose boundary is that loop and if $e_1,e_2$ are two edges with the same endpoints, then there is a 2-cell with boundary $e_1 \cup e_2$. Because $K \backslash G$ is tree-like, $X_K$ is simply-connected.


If $c$ is a 2-cell of $X_K$ and $g \in G$, then we let $gc$ be the corresponding 2-cell of $X_{gKg^{-1}}$. For example, if $c$ is bounds a loop based at the vertex $Kh \in X^{(0)}_K$ then $gc$ bounds a loop based at the vertex $gKh \in X^{(0)}_{gKg^{-1}}$. If $c$ bounds a pair of edges $(Kh, Khs), (Kh, Kht)$ (for some $t,s \in S:=\{a,b,a^{-1},b^{-1}\}$) with the same endpoints, then $gc$ bounds the pair of edges $(gKh, gKhs), (gKh, gKht)$. 

Let $\overline{\Tree_G}$ be the set of all pairs $(K,\omega)$ where $K \in \Tree_G$ and $\omega \subset X_K^{(2)}$ is a collection of $2$-cells of $X_K$. $G$ acts on this space by $g(K,\omega) = (gKg^{-1}, g\omega)$ where $g\omega =\{gc:~c \in \omega\}$.


There is a natural topology on $\overline{\Tree_G}$ which works by identifying $(K,\omega) \in \overline{\Tree_G}$ with the (labeled) complex $X_K \setminus \omega$. To explain, let $B_n(X_K)$ denote the subcomplex of $X_K$ consisting of all cells $c$ such that every vertex $v$ incident to $c$ has distance at most $n$ from $K$ with respect to the path metric on the 1-skeleton of $X_K$. For each integer $n\ge 1$ and $(K,\omega) \in \overline{\Tree_G}$, let $\textrm{Nbhd}_n(K,\omega)$ be the set of all $(K',\omega') \in \overline{\Tree_G}$ such that there is a cell-complex isomorphism $\phi:B_n(X_K) \to B_n(X_{K'})$ which preserves edge-labels and directions on the edges and also maps $B_n(X_K) \cap \omega$ bijectively onto $B_n(X_{K'}) \cap \omega'$. We obtain a topology on $\overline{\Tree_G}$ by declaring that each $\textrm{Nbhd}_n(K,\omega)$ is clopen. 

This topology makes $\overline{\Tree_G}$ a compact metrizable space. Indeed, for any fixed $n$ there are only finitely many subsets of the form $\textrm{Nbhd}_n(K,\omega)$ (and these are pairwise disjoint). Thus if $\{(K_i,\omega_i)\}_{i=1}^\infty$ is a sequence in $\overline{\Tree_G}$, then after passing to a subsequence we may assume the existence of $(K,\omega) \in \overline{\Tree_G}$ such that any $n$, for all sufficiently large $i$, $(K_i,\omega_i) \in \textrm{Nbhd}_n(K,\omega)$. It follows that $\lim_{i\to\infty} (K_i,\omega_i) = (K,\omega)$ and thus  $\overline{\Tree_G}$ is compact. We can also define a metric on $\overline{\Tree_G}$ by declaring the distance between any  $(K_1,\omega_1), (K_2,\omega_2) \in \overline{\Tree_G}$ to be $\frac{1}{n+1}$ where $n$ is the largest natural number with $(K_i,\omega_i) \in \textrm{Nbhd}_n(K_{3-i},\omega_{3-i})$ for $i=1,2$.
 
Let $(K,\omega) \in \overline{\Tree_G}$. We write $X_K\setminus \omega$ to mean the subcomplex of $X_K$ which contains the 1-skeleton of $X_K$ and every 2-cell other than those in $\omega$. Let $\pi_{K,\omega}:U_{K,\omega} \to X_K \setminus \omega$ denote the universal cover. There is a natural transitive right-action of $G$ on $U_{K,\omega}$ obtained by realizing the 1-skeleton of $U_{K,\omega}$ as the Schreier coset graph of a subgroup of $G$. To be precise, if $v$ is a vertex of $U_{K,\omega}$ and $s \in \{a,b,a^{-1},b^{-1}\}$ then $vs$ is the vertex such that $(v,vs)$ is an edge of $U_{K,\omega}$ and $\pi_{K,\omega}(v,vs)$ is labeled $s$. 

Choose a vertex $u_{K,\omega} \in U^{(0)}_{K,\omega}$ such that $\pi_{K,\omega}(u_{K,\omega})=K$ and let $S_{K,\omega}$ be the stabilizer $S_{K,\omega}:=\{g \in G:~ u_{K,\omega} g = u_{K,\omega}\}$. Because $S_{K,\omega}\backslash G$ is naturally identified with the 1-skeleton of $U_{K,\omega}$, it follows that $S_{K,\omega} \backslash G$ is tree-like. Also, observe that $S_{K,\omega}$ does not depend on the choice of $u_{K,\omega}$. Indeed, it is the subgroup of $K$ generated by all elements of the form 
\begin{enumerate}
\item $gsg^{-1}$ for every 2-cell {\bf not} in $\omega$ which bounds a loop based at $Kg$ labeled $s \in \{a,b,a^{-1},b^{-1}\}$;
\item $g s_1 s_2^{-1} g^{-1}$ for every 2-cell {\bf not} in $\omega$ which bounds a bigon whose edges are labeled $s_1,s_2 \in \{a,b,a^{-1},b^{-1}\}$ and are directed from $Kg$ to $Kgs_1=Kgs_2$.
\end{enumerate}
The fundamental group of $X_K \setminus \omega$ is $K/S_{K,\omega}$. In particular, $S_{K,\omega}$ is normal in $K$. 

The map from $\Psi:\overline{\Tree_G} \to \Tree_G$ defined by $\Psi(K,\omega)= S_{K,\omega}$ is $G$-equivariant. Therefore, if $\widetilde{\eta}$ is a $G$-invariant ergodic probability measure on $\overline{\Tree_G}$, then, $\Psi_*\widetilde{\eta}$ is $G$-invariant and ergodic.  Also $\Psi$ is continuous, so $\Psi_*: \cM(\overline{\Tree_G}) \to \cM(\Tree_G)$ is continuous in the weak* topology where $\cM(\overline{\Tree_G})$ denotes the space of $G$-invariant Borel probability measures on $\overline{\Tree_G}$.

\begin{lem}\label{lem:R-monotone}
For any subgroup $K<G$, let $\overline{R}(K)$ be the expected number of returns of the random walk on $K\backslash G$ to $K$. That is,
$$\overline{R}(K) = \int \#\{ n :~ Kg_n =K\}~d\P(g_0,g_1,\ldots).$$
For $\lambda \in \cM(\Sub_G)$, let $\overline{R}(\lambda) = \int \overline{R}(K)~d\lambda(K)$. If $\widetilde{\lambda}$ is a $G$-invariant measure on $\overline{\Tree_G}$ which projects to $\lambda \in \cM(\Sub_G)$, then  $\overline{R}(\lambda) \ge \overline{R}(\Psi_* \widetilde{\lambda}).$
\end{lem}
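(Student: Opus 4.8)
The plan is to reduce the statement to the elementary fact that the expected-return functional $\overline{R}$ is monotone under inclusion of subgroups, and then to feed in the single structural input supplied by the covering construction, namely that $S_{K,\omega}=\Psi(K,\omega)$ is a subgroup of $K$. First I would rewrite the functional in the convenient form
$$\overline{R}(K) = \sum_{n=0}^\infty \Pr(g_n \in K),$$
where $(g_0,g_1,\ldots)$ is distributed according to $\P$, so that $g_n = Z_n$ is the random walk driven by i.i.d.\ $\mu$-increments. Here the interchange of sum and integral is justified by Tonelli's theorem since every term is nonnegative, and the event $K g_n = K$ is simply $\{g_n \in K\}$. The resulting identity is valid in $[0,\infty]$, which is all we need.

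The crux is the following monotonicity property: if $H \le K$ are closed subgroups of $G$, then $\overline{R}(H) \le \overline{R}(K)$. I would prove this pathwise. Fix a realization $(g_n)_{n\ge 0}$. Since $H \subseteq K$, the event $\{g_n \in H\}$ is contained in $\{g_n \in K\}$, so the set of return times $\{n : H g_n = H\}$ is a subset of $\{n : K g_n = K\}$, whence $\#\{n : H g_n = H\} \le \#\{n : K g_n = K\}$ for every realization; integrating against $\P$ gives the inequality. Equivalently, this is the covering-graph statement: because $S_{K,\omega}$ is normal in, hence contained in, $K$, the assignment $S_{K,\omega}g \mapsto Kg$ is a well-defined covering $\Schreier(S_{K,\omega}\backslash G)\to\Schreier(K\backslash G)$ fixing the basepoint, the upstairs walk $\{S_{K,\omega}g_n\}$ is the lift of the base walk $\{Kg_n\}$, and every return of the lift projects to a return of the base walk. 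Applying the monotonicity with $H=S_{K,\omega}$, and using $S_{K,\omega}\le K$ as established when constructing $\Psi$, yields $\overline{R}(\Psi(K,\omega)) \le \overline{R}(K)$ for every $(K,\omega)\in\widetilde{\Tree_G}$.

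Finally I would integrate this inequality against $\widetilde\lambda$. Writing $p:\widetilde{\Tree_G}\to\Tree_G$ for the forgetful projection $(K,\omega)\mapsto K$, the hypothesis that $\widetilde\lambda$ projects to $\lambda$ is the assertion $p_*\widetilde\lambda=\lambda$. Hence
$$\overline{R}(\Psi_*\widetilde\lambda)=\int \overline{R}(S)\,d(\Psi_*\widetilde\lambda)(S)=\int \overline{R}(\Psi(K,\omega))\,d\widetilde\lambda(K,\omega)\le \int \overline{R}(K)\,d\widetilde\lambda(K,\omega)=\int \overline{R}(K)\,d\lambda(K)=\overline{R}(\lambda),$$
where the penultimate equality holds because the integrand $\overline{R}(K)$ depends only on the first coordinate, so its integral against $\widetilde\lambda$ coincides with its integral against $p_*\widetilde\lambda=\lambda$.

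As for difficulty, there is essentially no obstacle beyond bookkeeping: the genuine content, that unfolding $X_K\setminus\omega$ to its universal cover can only decrease the expected number of returns, is already encoded in the inclusion $S_{K,\omega}\le K$ proved during the construction of $\Psi$. The only points requiring care are the measurability of $K\mapsto\overline{R}(K)$ on $\Sub_G$ (immediate from writing it as a countable sum of the Borel functions $K\mapsto\Pr(g_n\in K)$) and fixing the intended reading of ``projects to $\lambda$'' as $p_*\widetilde\lambda=\lambda$ rather than $\Psi_*\widetilde\lambda=\lambda$, the latter rendering the statement vacuous.
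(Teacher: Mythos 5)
Your proof is correct and takes essentially the same route as the paper: both arguments hinge on the single observation that $S_{K,\omega} \le K$ forces every return of the walk on $S_{K,\omega}\backslash G$ to project to a return of the walk on $K\backslash G$, giving $\overline{R}(S_{K,\omega}) \le \overline{R}(K)$ pointwise, after which one integrates against $\widetilde{\lambda}$ and uses that the integrand depends only on the first coordinate. Your additional remarks on measurability and on the intended meaning of ``projects to $\lambda$'' are accurate but not needed beyond what the paper already records.
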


\begin{proof}
Because $\pi_{K,\omega}:U_{K,\omega} \to X_K\setminus \omega$ is a covering map; the expected number of times a random walk started at a vertex $v$ in $U_{K,\omega}$ returns to $v$ is bounded by the expected number of times the projected random walk returns to $\pi_{K,\omega}(v)$. So $\overline{R}(S_{K,\omega}) \le \overline{R}(K)$ and 
\begin{eqnarray*}
\overline{R}(\Psi_* \widetilde{\lambda}) = \int \overline{R}(S_{K,\omega}) ~d\tilde{\lambda}(K,\omega) \le \int \overline{R}(K)~d\lambda = \overline{R}(\lambda).
\end{eqnarray*}
\end{proof}

\begin{cor}\label{cor:cont}
For $\eta \in \cM(\Tree_G)$, let $\cM_\eta(\overline{\Tree_G})  $ be the space of all $G$-invariant Borel probability measures on $\overline{\Tree_G}$ which project to $\eta$. This is a compact convex space under the weak* topology. Moreover, if $\overline{R}(\eta)<\infty$ then the map which sends $\lambda \in \overline{\Tree_G}$ to $h_\mu(\Psi_*\lambda)$ is continuous on $\cM_\eta(\overline{\Tree_G})  $. 
\end{cor}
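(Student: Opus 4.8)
The plan is to prove Corollary~\ref{cor:cont} in two independent parts: first the topological statement that $\cM_\eta(\widetilde{\Tree_G})$ is compact and convex, and second the continuity of $\lambda \mapsto h_\mu(\Psi_*\lambda)$ under the hypothesis $\overline{R}(\eta)<\infty$. Convexity is immediate since a convex combination of $G$-invariant measures projecting to $\eta$ again projects to $\eta$ and is $G$-invariant. For compactness, I would first observe that $\cM(\widetilde{\Tree_G})$, the space of $G$-invariant Borel probability measures on the compact metrizable space $\widetilde{\Tree_G}$, is weak*-compact by the standard Banach--Alaoglu argument together with the fact that $G$-invariance is a closed condition. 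Then $\cM_\eta(\widetilde{\Tree_G})$ is the preimage of the single point $\{\eta\}$ under the continuous (by the remark preceding the statement) pushforward map $\Psi_*$, hence it is a closed subset of a compact space, therefore compact.

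The heart of the matter is the continuity of $\lambda \mapsto h_\mu(\Psi_*\lambda)$. The strategy is to reduce this to Theorem~\ref{thm:continuous}: I would like to apply that theorem with $\cN$ taken to be the image $\Psi_*(\cM_\eta(\widetilde{\Tree_G}))$. Since $\Psi_*$ is continuous and $\cM_\eta(\widetilde{\Tree_G})$ is compact, the composition $\lambda \mapsto h_\mu(\Psi_*\lambda)$ will be continuous as soon as I verify that $h_\mu$ is continuous on the image set $\cN$, i.e. that $\cN$ has controlled return-time probabilities. So the key step is to show that the family $\{\Psi_*\widetilde\lambda : \widetilde\lambda \in \cM_\eta(\widetilde{\Tree_G})\}$ has controlled return-time probabilities, using only the finiteness assumption $\overline{R}(\eta)<\infty$ and the monotonicity supplied by Lemma~\ref{lem:R-monotone}.

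To establish controlled return-time probabilities I would argue via a uniform integrability / Markov-type bound on the expected number of returns. By Lemma~\ref{lem:R-monotone}, every $\widetilde\lambda \in \cM_\eta(\widetilde{\Tree_G})$ satisfies $\overline{R}(\Psi_*\widetilde\lambda) \le \overline{R}(\eta) < \infty$, so the family $\cN = \Psi_*(\cM_\eta(\widetilde{\Tree_G}))$ is \emph{uniformly} bounded in expected returns. The plan is then to relate the tail return probability $R_{\ge n}(\mu,K)$ to the quantity $\overline{R}(K)$. Concretely, writing $\overline{R}(K) = \sum_{m\ge 0}\Pr(Z_m(K)=K)$, I would show that if the random walk has probability at least $\epsilon$ of returning to $K$ at some time $\ge n$, then its conditional expected number of returns after time $n$ is bounded below, so that on the event $\{R_{\ge n}(\mu,K)\ge \epsilon\}$ the tail contribution $\sum_{m\ge n}\Pr(Z_m(K)=K)$ is bounded below by a constant multiple of $\epsilon$ (using the strong Markov property to restart the walk at its return time). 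A Markov inequality applied to the nonnegative integrable function $K \mapsto \sum_{m\ge n}\Pr(Z_m(K)=K)$, whose $\eta$-integral tends to $0$ as $n\to\infty$ by dominated convergence (dominated by $\overline R(K)$, integrable since $\overline{R}(\eta)<\infty$), then yields
\begin{equation*}
\sup_{\lambda \in \cN}\lambda\big(\{K : R_{\ge n}(\mu,K)\ge \epsilon\}\big) \le \frac{1}{c\,\epsilon}\int_{\Tree_G} \sum_{m\ge n}\Pr(Z_m(K)=K)~d\eta(K) \xrightarrow[n\to\infty]{} 0,
\end{equation*}
for a fixed constant $c>0$, after which sending $\epsilon\to 0^+$ gives the controlled return-time property.

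The main obstacle I anticipate is the last estimate: carefully bounding the tail return probability $R_{\ge n}(\mu,K)$ by the tail of the expected-returns series \emph{uniformly} in $K$ and, crucially, transferring this from the fixed reference measure $\eta$ to the whole family $\cN$. The transfer relies on the fact that the relevant tail integral is controlled by $\overline R$ under $\eta$ itself (via Lemma~\ref{lem:R-monotone}, which bounds $\overline R(\Psi_*\widetilde\lambda)$ but not directly the $n$-tail under $\Psi_*\widetilde\lambda$), so I must be careful to phrase the controlled-return condition for $\Psi_*\widetilde\lambda$ in terms of quantities that pull back monotonically along $\Psi$ to the $\eta$-side. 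The cleanest route is to prove a pointwise inequality $R_{\ge n}(\mu, S_{K,\omega}) \le R_{\ge n}(\mu, K)$ (or a bound of the tail return count) analogous to Lemma~\ref{lem:R-monotone}, so that the supremum over $\cN$ is dominated by a single integral against $\eta$, and then invoke the finiteness $\overline R(\eta)<\infty$ to make that integral's $n$-tail vanish.
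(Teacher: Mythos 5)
Your proposal follows essentially the same route as the paper: the paper's proof of this corollary is exactly the reduction to Theorem~\ref{thm:continuous} via Lemma~\ref{lem:R-monotone}, and the verification you describe --- that the uniform bound on $\overline{R}$ yields controlled return-time probabilities, via a pointwise tail inequality $R_{\ge n}(\mu, S_{K,\omega}) \le R_{\ge n}(\mu,K)$ (or $\sum_{m\ge n}\Pr(Z_m(S_{K,\omega})=S_{K,\omega}) \le \sum_{m\ge n}\Pr(Z_m(K)=K)$) so that the supremum over the whole family is dominated by a single integral against $\eta$, followed by Markov's inequality and dominated convergence --- is precisely the step the paper leaves implicit, and your instinct that the mere bound $\overline{R}(\Psi_*\widetilde\lambda)\le\overline{R}(\eta)$ is not by itself enough without this pointwise domination is correct. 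Two minor corrections: in your compactness argument, ``projects to $\eta$'' refers to the forgetful map $(K,\omega)\mapsto K$ rather than to $\Psi$ (the argument is unchanged, since that projection is also continuous); and the crude union bound $R_{\ge n}(\mu,K)\le \sum_{m\ge n}\Pr(Z_m(K)=K)$ already gives what you need with $c=1$, so the strong Markov restart is unnecessary.
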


\begin{proof}
By the previous lemma, $\infty>\overline{R}(\eta)\ge \overline{R}(\Psi_*\lambda)$ for all $\lambda \in \cM_\eta(\overline{\Tree_G})  $. Therefore, $\cM_\eta(\overline{\Tree_G})  $ has controlled return-time probabilities. So the corollary follows from Theorem \ref{thm:continuous}.
\end{proof}


In order to prove Theorem \ref{thm:main}, it now suffices to show there exists a sequence $\{\eta_n\}_{n=1}^\infty \subset \cM(\Tree_G)$ and for every $n$, a continuous 1-parameter family $\{\eta_{n,p} :~ 0\le p \le 1\} \subset \cM_{\eta_n}(\overline{\Tree_G}) $ of ergodic measures such that $\overline{R}(\eta_n)<\infty$ for all $n$, $\lim_{n \to \infty} h_\mu(\eta_n) = 0$, $\Psi_*(\eta_{n,0})=\eta_n$ and $\Psi_*(\eta_{n,1})$ is the Dirac measure on the trivial subgroup. This is accomplished in the next section.

\subsection{Paths of IRS's}

For each integer $n\ge 1$, we define a subgroup $K_n < G$ as follows (see figure \ref{fig:3} for an example). $K_n$ is generated by all elements of the form $g h g^{-1}$ where $g \in \langle a^n, b^n\rangle$ and either $h = a^k b^r a^{-k}$ for some $1 \le |k| \le n-1$ and $r \in \Z$ or $h=b^k a^r b^{-k}$ for some $1 \le |k| \le n-1$ and $r \in \Z$.

\begin{figure}[htb]
\begin{center}
\ \psfig{file=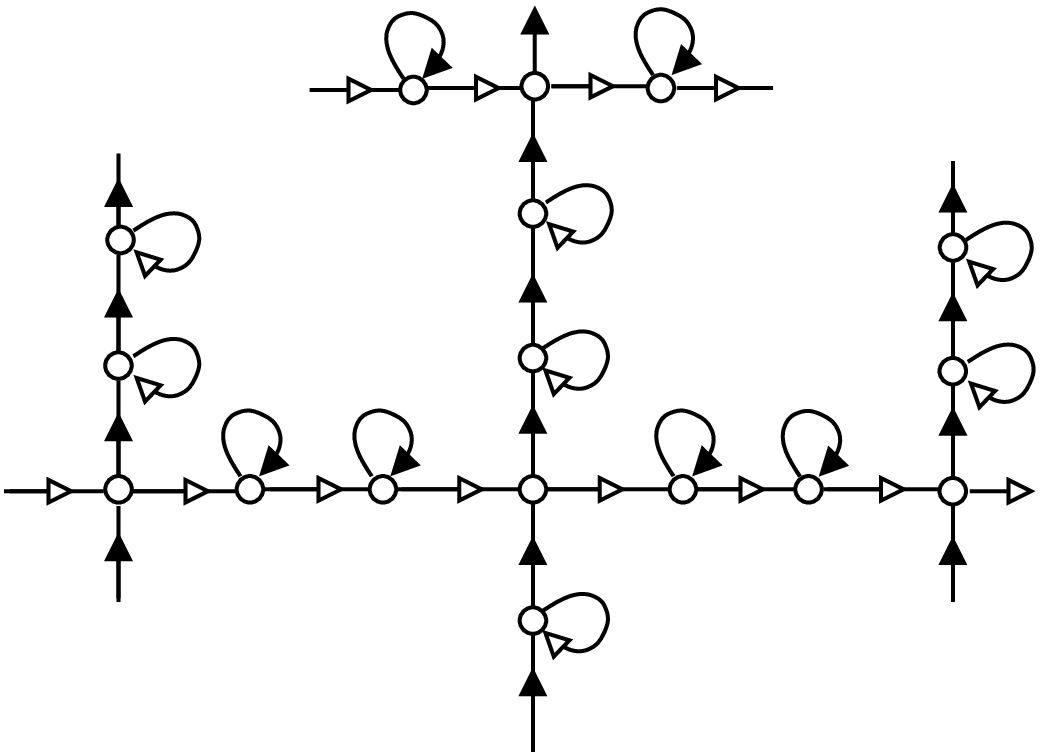,height=2.5in,width=4in}
\caption{Part of the Schreier coset graph of $K_3\backslash G$. The black arrows represent $a$ and the white arrows represent $b$. }
\label{fig:3}
\end{center}
\end{figure}

Note that there are only a finite number of $G$-conjugates of $K_n$. Indeed, 
$$\cC_n:=\{a^{-i} K_n a^{i}, b^{-i}K_n b^{i}:~ 0\le i \le n-1\}$$
 is a complete set of conjugates. To see this, it suffices to show that for every group $J\in \cC_n$ and $s \in \{a,b,a^{-1},b^{-1}\}$, $sJs^{-1}\in \cC_n$.  For example, note that for any $1\le i \le n-1$, $a^ib^{-1} a^{-i}, a^{i}ba^{-i} \in K_n$. Therefore, 
$$ba^{-i} K_n a^i b^{-1} = ba^{-i} (a^ib^{-1} a^{-i}) K_n (a^{i}ba^{-i}) a^i b^{-1} = a^{-i} K_n a^{i} \in \cC_n.$$
The other cases are similar.
Let $\eta_n \in \cM(G)$ be the measure uniformly distributed on $\cC_n$, the set of $K_n$ conjugates.  From figure \ref{fig:3}, it is apparent that $K_n \backslash G$ is tree-like. So $\eta_n \in \cM(\Tree_G)$.



\begin{cor}\label{cor:eta-n}
The map $\lambda \in \cM_{\eta_n}(\overline{\Tree_G})  \mapsto h_\mu(\Psi_*\lambda )$ is continuous on $\cM_{\eta_n}(\overline{\Tree_G})  $.
\end{cor}
\begin{proof}
It is not difficult to see that $\overline{R}(\eta_n)$ is finite. For example, this can be seen from the obvious quasi-isometry between $K_n\backslash G$ and the free group $G$ or from the classification of recurrent quasi-transitive graphs (as explained in \cite{Wo00} for example). The present corollary now follows from Corollary \ref{cor:cont}.
\end{proof}

\begin{lem}\label{lem:limits}
Let $\eta_{n,1} \in \cM_\eta(\overline{\Tree_G})  $ be uniformly distributed on the $G$-orbit of $(K_n, X^{(2)}_{K_n})$. Then $\Psi_*\eta_{n,1}$ is the trivial subgroup, so $h_\mu(\Psi_*\eta_{n,1}) = h_{max}(\mu)$.  Also $\lim_{n\to \infty} h_\mu(\Psi_*\eta_n) = 0$.
\end{lem}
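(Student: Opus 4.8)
The first assertion is a direct consequence of the description of the map $\Psi$ from \S\ref{sec:cover}. Taking $\omega = X^{(2)}_{K_n}$ means that \emph{every} $2$-cell of $X_{K_n}$ lies in $\omega$, so there are no $2$-cells outside $\omega$; consequently the generating list for $S_{K_n,\omega}$ given there is empty and $S_{K_n,\omega} = \{e\}$. Since $\Psi$ is $G$-equivariant and the trivial subgroup is fixed by conjugation, $\Psi$ carries the entire $G$-orbit of $(K_n, X^{(2)}_{K_n})$ to $\{e\}$, so $\Psi_*\eta_{n,1}$ is the Dirac measure on the trivial subgroup. For this measure $(B(\Sub_G), \nu_{\Psi_*\eta_{n,1}})$ is simply the Poisson boundary $(B_e,\nu_e)$ of $(G,\mu)$, whose Furstenberg entropy equals $h_{max}(\mu)$ as recalled in the introduction; thus $h_\mu(\Psi_*\eta_{n,1}) = h_{max}(\mu)$.

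For the limit $\lim_{n\to\infty} h_\mu(\eta_n)=0$ the plan is to feed a volume estimate into the entropy formula of Theorem \ref{thm:hard}. That theorem gives
$$h_\mu(\eta_n) = \lim_{m\to\infty} \frac{1}{m}\int H(\mu^m_K)\,d\eta_n(K),$$
and since $\mu^m_K$ is supported on the ball $B_m(K)$ of radius $m$ about the base coset in the Schreier graph of $K\backslash G$, we have the crude bound $H(\mu^m_K) \le \log \#B_m(K)$. Because $\eta_n$ is uniform on the finite conjugacy set $\cC_n$, it therefore suffices to bound $\#B_m(K)$ uniformly over $K \in \cC_n$.

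The key point is the geometry of $K_n\backslash G$. Reading off the generators, at each interior corridor coset $K_n a^k$ with $1\le |k| \le n-1$ one has $a^k b^r a^{-k} \in K_n$, so the $b$-edges there are self-loops and the walk cannot branch in the $b$-direction; that direction only ``opens up'' again at the cosets $K_n a^{jn}$, and symmetrically for $b$-corridors. Thus $K_n\backslash G$ is a subdivided $4$-regular tree: degree-$4$ branch points spaced $n$ apart, joined by length-$n$ corridors along which the transverse generator is frozen. Consequently the branching factor $3$ is incurred only once every $n$ steps, so a ball of radius $m$ meets at most $C'\,3^{m/n}$ branch points and $\#B_m(K) \le C\,n\,3^{m/n}$ for universal constants $C,C'$, uniformly over all base cosets and hence over all $K \in \cC_n$. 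This yields $H(\mu^m_K) \le \tfrac{m}{n}\log 3 + \log(Cn)$ for every $K\in\cC_n$.

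Combining these bounds gives $\frac1m\int H(\mu^m_K)\,d\eta_n(K) \le \frac{\log 3}{n} + \frac{\log(Cn)}{m}$, and letting $m\to\infty$ we obtain $h_\mu(\eta_n) \le \frac{\log 3}{n}$, which tends to $0$. The main obstacle is the structural claim in the previous paragraph: one must verify carefully from the defining generators of $K_n$ that the frozen-corridor/branch-point picture is exactly correct (in particular that consecutive branch points along a corridor lie at distance exactly $n$, using that $a^{jn}ba^{-jn}\notin K_n$), and that the resulting volume bound is genuinely uniform over the shifted base cosets arising from the conjugates in $\cC_n$. Once this geometry is established, the entropy estimate is immediate.
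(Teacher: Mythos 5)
Your proof of the first assertion is correct and supplies the details behind what the paper dismisses as ``obvious'': with $\omega = X^{(2)}_{K_n}$ the generating set for $S_{K_n,\omega}$ is empty, $\Psi$ collapses the whole orbit to $\{e\}$, and $h_\mu(\delta_{\{e\}})=h_\mu(B_e,\nu_e)=h_{max}(\mu)$. For the second assertion you take a genuinely different route from the paper. The paper argues softly: it asserts that $\eta_n$ converges weak* to the Dirac mass at a normal subgroup with quotient $\Z$, observes that by Theorem \ref{thm:hard} the map $\lambda \mapsto h_\mu(B(\Sub_G),\nu_\lambda)$ is an infimum of the weak*-continuous functions $\lambda\mapsto \frac{1}{m}\int H(\mu^m_K)\,d\lambda(K)$ and hence upper semicontinuous, and concludes $\limsup_n h_\mu(\eta_n)\le 0$. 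You instead prove the quantitative bound $h_\mu(\eta_n)\le \frac{\log 3}{n}$ by feeding the volume estimate $\#B_m(K)\le Cn3^{m/n}$ into the same entropy formula. Your structural description of $K_n\backslash G$ (length-$n$ corridors with the transverse generator frozen by the relations $ga^kb^ra^{-k}g^{-1}\in K_n$, joined at the branch cosets $K_ng$, $g\in\langle a^n,b^n\rangle$, which are distinct because $\langle a^n,b^n\rangle\cap K_n=\{1\}$) is exactly what the defining generators give and is the same picture the paper reads off its figure, and uniformity over the $2n-1$ conjugates is harmless since recentering moves the base point at most $n$ from a branch vertex. What your approach buys is an explicit rate and, more importantly, independence from identifying the weak* limit of $\eta_n$ --- a point where the paper is actually delicate, since a conjugate $a^{-i}K_na^i$ based deep inside an $a$-corridor agrees on large balls with the kernel of $a\mapsto 1$, $b\mapsto 0$ rather than with the stated $\ker(a\mapsto1,\ b\mapsto 1)$; the paper's conclusion survives because any such limit has zero entropy, but your computation sidesteps the issue entirely. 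What the paper's argument buys is brevity. Both routes are valid.
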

\begin{proof}
The first claim is obvious. Note $\eta_n$ converges in the weak* topology to $\kappa=(1/2)\delta_A + (1/2)\delta_B$ (as $n\to\infty$) where $A$ is the smallest normal subgroup of $G$ containing $\{a^n:~n\in \Z\}$ and $B$ is the smallest normal subgroup of $G$ containing $\{b^n:~n\in \Z\}$. Both $G/A$ and $G/B$ are isomorphic to the group of integers. Because the random walk on $\Z$ has zero entropy, $h_\mu( B(\Sub_G), \nu_{\kappa}) =0$. By Theorem \ref{thm:hard}, $\lambda \mapsto h_\mu(B(\Sub_G), \nu_\lambda)$ is an infimum of continuous functions and is therefore, upper semi-continuous. Therefore, 
$$\limsup_{n\to\infty} h_\mu( \Psi_*\eta_{n,0})  \le h_\mu( B(\Sub_G), \nu_{\kappa}) =0.$$
\end{proof}

Let  $\eta_{n,0}$ be the measure uniformly distributed on the $G$-orbit of $(K_n, \emptyset)$. Trivially, $\Psi_*(\eta_{n,0}) =\eta_n$. Because of the Lemma above and Corollary \ref{cor:eta-n} to prove Theorem \ref{thm:main} it now suffices to show that for every $n\ge 1$ there exists a continuous path of ergodic measures in $\cM_\eta(\overline{\Tree_G})  $ from $\eta_{n,1}$ to $\eta_{n,0}$. We give two different proofs of this fact. The first is constructive. The second proof (in the next section) shows that in fact the entire space $\cM^e_{\eta_n}(\overline{\Tree_G}) $ of ergodic measures in $\cM_{\eta_n}(\overline{\Tree_G}) $ is pathwise connected.


\begin{proof}[Proof of Theorem \ref{thm:main}]
Let $0\le p \le 1$ and $n\ge 1$ be an integer. Let $K'$ be a uniformly random conjugate of $K_n$. Let $\omega$ be the random subset of $X_{K'}^{(2)}$ satisfying
\begin{itemize}
\item for every disjoint pair of finite sets $Y,Z \subset X_K^{(2)}$, the probability that $Y \subset \omega$ and $Z \cap \omega = \emptyset$ is $p^{|Y|}(1-p)^{|Z|}$.
\end{itemize}
Let $\eta_{n,p}$ be the law of $(K',\omega)$. It is a $G$-invariant ergodic probability measure on $\overline{\Tree_G}$. Also, $p \mapsto \eta_{n,p}$ is continuous. So Corollary \ref{cor:eta-n} implies $p \mapsto h_\mu( \Psi_*\eta_{n,p})$ is continuous. By Lemma \ref{lem:limits}, for every $t$ with $ h_\mu(\eta_n) \le t \le h_{max}(\mu)$, there is a $p \in [0,1]$ such that $h_\mu(\Psi_*\eta_{n,p})=t$. Because $\lim_{n\to \infty} h_\mu(\eta_n) = 0$, this implies the theorem.
\end{proof}

It may interest the reader to know that the paths $p \mapsto h_\mu(\Psi_*\eta_{n,p})$ are monotone increasing. This follows from the next lemma and corollary.

\begin{lem}
Let $\rho$ be a Borel probability measure on $\{ (K_1,K_2) \in \Sub_G\times \Sub_G:~ K_1 \le K_2\}$. Suppose $\rho$ is invariant under the diagonal action of $G$ by conjugation. For $i=1,2$, let $\rho_i$ be the projection of $\rho$ onto the $i$-th coordinate. Then $h_\mu(B(\Sub_G), \nu_{\rho_1}) \ge h_\mu(B(\Sub_G), \nu_{\rho_2}) $.
\end{lem}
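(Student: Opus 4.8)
The plan is to reduce the statement to the entropy formula of Theorem~\ref{thm:hard} and to exploit the elementary fact that pushing a probability measure forward under a map can only decrease its Shannon entropy (the data-processing inequality). First I would observe that, since $\rho$ is invariant under the diagonal conjugation action of $G$, each marginal $\rho_i$ is conjugation-invariant, so $\rho_1,\rho_2\in\cM(\Sub_G)$ and Theorem~\ref{thm:hard} applies to both. It therefore suffices to show that $\int H(\mu^n_K)\,d\rho_1(K)\ge \int H(\mu^n_K)\,d\rho_2(K)$ for every fixed $n$, then divide by $n$ and let $n\to\infty$. Intuitively the inequality reflects that the smaller subgroup $K_1$ gives the finer coset space $K_1\backslash G$, which carries at least as much information as $K_2\backslash G$.

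The geometric input is a comparison of coset spaces. For $\rho$-a.e. pair $(K_1,K_2)$ we have $K_1<K_2$, so there is a well-defined surjection $q\colon K_1\backslash G\to K_2\backslash G$ given by $q(K_1g)=K_2g$; this is well-defined because $K_1g=K_1g'$ forces $g'g^{-1}\in K_1\subset K_2$, whence $K_2g=K_2g'$. Since $q\circ\pi_{K_1}=\pi_{K_2}$ as maps $G\to K_2\backslash G$, pushing $\mu^n$ forward gives $\mu^n_{K_2}=q_*\mu^n_{K_1}$. Next I would invoke the monotonicity $H(f_*\omega)\le H(\omega)$, valid for any map $f$ and probability measure $\omega$ on a countable set: grouping the atoms of $\omega$ lying in a fixed fiber $f^{-1}(v)$ and using $-\sum_{w\in f^{-1}(v)}\omega(w)\log\omega(w)\ge -\big(\sum_{w\in f^{-1}(v)}\omega(w)\big)\log\big(\sum_{w\in f^{-1}(v)}\omega(w)\big)$, then summing over $v$. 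Applied with $f=q$ and $\omega=\mu^n_{K_1}$, this yields the pointwise bound $H(\mu^n_{K_2})\le H(\mu^n_{K_1})$ for $\rho$-a.e. $(K_1,K_2)$.

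Integrating this inequality against $\rho$ and recalling that $\rho_i$ is the $i$-th marginal gives
\begin{equation*}
\int H(\mu^n_K)\,d\rho_1(K)=\int H(\mu^n_{K_1})\,d\rho(K_1,K_2)\ge \int H(\mu^n_{K_2})\,d\rho(K_1,K_2)=\int H(\mu^n_K)\,d\rho_2(K)
\end{equation*}
for every $n$. Dividing by $n$, passing to the limit, and applying Theorem~\ref{thm:hard} to each marginal produces
$$h_\mu(B(\Sub_G),\nu_{\rho_1})=\lim_{n\to\infty}\frac1n\int H(\mu^n_K)\,d\rho_1(K)\ge\lim_{n\to\infty}\frac1n\int H(\mu^n_K)\,d\rho_2(K)=h_\mu(B(\Sub_G),\nu_{\rho_2}),$$
which is the desired conclusion.

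I do not expect a serious obstacle here: the argument is the data-processing inequality combined with the coset-space surjection. The only points requiring care are the joint measurability needed to integrate the pointwise entropy bound against $\rho$ (so that $\int H(\mu^n_{K_i})\,d\rho$ is well-defined and agrees with the marginal integral) and the verification that $q$ is genuinely well-defined with $\mu^n_{K_2}=q_*\mu^n_{K_1}$; both are routine. Finiteness of the integrands is automatic since $H(\mu^n_K)\le H(\mu^n)\le nH(\mu)<\infty$, and in any case the pointwise inequality $H(\mu^n_{K_2})\le H(\mu^n_{K_1})$ holds regardless.
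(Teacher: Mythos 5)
Your argument is correct and is essentially identical to the paper's proof: both use the surjection $K_1\backslash G\to K_2\backslash G$ to get the pointwise bound $H(\mu^n_{K_1})\ge H(\mu^n_{K_2})$, integrate against $\rho$, and apply Theorem \ref{thm:hard} to both marginals. Your write-up simply spells out the data-processing inequality and the well-definedness of the quotient map in more detail than the paper does.
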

\begin{proof}
Observe that if $K_1 \le K_2$ then $H(\mu^n_{K_1}) \ge H(\mu^n_{K_2})$ since the projection map $K_1 \backslash G \to K_2 \backslash G_2$ maps $\mu^n_{K_1}$ onto $\mu^n_{K_2}$. By Theorem \ref{thm:hard},
\begin{eqnarray*}
 h_\mu(B(\Sub_G), \nu_{\rho_1}) &=& \lim_{n\to\infty} \frac{1}{n} \int H(\mu^n_K)~d\rho_1(K) = \lim_{n\to\infty} \frac{1}{n} \int H(\mu^n_{K_1})~d\rho(K_1,K_2) \\
 &\ge& \lim_{n\to\infty} \frac{1}{n} \int H(\mu^n_{K_2})~d\rho(K_1,K_2) = \lim_{n\to\infty} \frac{1}{n} \int H(\mu^n_K)~d\rho_2(K) \\
  &=&  h_\mu(B(\Sub_G), \nu_{\rho_2}).
   \end{eqnarray*}
   \end{proof}

\begin{cor}
The paths $p \mapsto h_\mu(\Psi_*\eta_{n,p})$ are monotone increasing. 
\end{cor}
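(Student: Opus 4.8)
The plan is to reduce the claim to the monotonicity lemma just proved by exhibiting, for any two parameters $p_1 \le p_2$, a diagonally conjugation-invariant coupling $\rho$ of the two subgroup laws $\Psi_*\eta_{n,p_2}$ and $\Psi_*\eta_{n,p_1}$ that is supported on nested pairs, with the \emph{smaller} subgroup coming from the \emph{larger} parameter. The key structural fact, read off from the explicit generating set of $S_{K,\omega}$, is that $\Psi$ reverses inclusions in $\omega$: the generators of $S_{K,\omega}$ are indexed by the $2$-cells \emph{not} lying in $\omega$, so if $\omega \subseteq \omega'$ then $S_{K,\omega'} < S_{K,\omega}$. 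Equivalently, enlarging $\omega$ removes more $2$-cells from $X_K$, enlarges the fundamental group $K/S_{K,\omega}$ of $X_K \setminus \omega$, and hence shrinks $S_{K,\omega}$.

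First I would fix $0 \le p_1 \le p_2 \le 1$ and set up the standard monotone coupling of the two Bernoulli processes on the $2$-cells. Concretely, let $K'$ be a uniformly random conjugate of $K_n$ and assign to each $2$-cell $c$ of $X_{K'}$ an independent uniform mark $U_c \in [0,1]$, declaring $c \in \omega_p$ iff $U_c \le p$. Then $(K',\omega_{p_1})$ has law $\eta_{n,p_1}$, $(K',\omega_{p_2})$ has law $\eta_{n,p_2}$, and $\omega_{p_1} \subseteq \omega_{p_2}$ pointwise. Because $G$ permutes the $2$-cells via $c \mapsto gc$ and the marks are i.i.d., the joint law of $(K',\omega_{p_1},\omega_{p_2})$ is invariant under the diagonal $G$-action.

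Next I would let $\rho$ be the law of the pair $\big(S_{K',\omega_{p_2}},\, S_{K',\omega_{p_1}}\big)$. By the inclusion-reversing property above together with $\omega_{p_1}\subseteq \omega_{p_2}$, this pair satisfies $S_{K',\omega_{p_2}} < S_{K',\omega_{p_1}}$, so $\rho$ is supported on $\{(K_1,K_2): K_1 < K_2\}$; and since $\Psi$ is $G$-equivariant while the coupling is $G$-invariant, $\rho$ is invariant under the diagonal conjugation action. Its marginals are exactly $\rho_1 = \Psi_*\eta_{n,p_2}$ and $\rho_2 = \Psi_*\eta_{n,p_1}$. Applying the preceding lemma to $\rho$ gives
$$ h_\mu(\Psi_*\eta_{n,p_2}) = h_\mu(B(\Sub_G),\nu_{\rho_1}) \ge h_\mu(B(\Sub_G),\nu_{\rho_2}) = h_\mu(\Psi_*\eta_{n,p_1}),$$
which is the asserted monotonicity.

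The only genuinely delicate point is the inclusion-reversal $S_{K,\omega'} < S_{K,\omega}$ for $\omega \subseteq \omega'$; this I would justify directly from the two-item generating description of $S_{K,\omega}$ (loops and bigons whose bounding $2$-cells lie outside $\omega$), observing that passing to a larger $\omega$ only deletes generators. Everything else---the invariance of the coupling and the identification of the two marginals---is routine once the coupling is in place.
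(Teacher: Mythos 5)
Your proposal is correct and is essentially the paper's own argument: the same monotone coupling via i.i.d.\ uniform marks on the $2$-cells, the same inclusion-reversal $\omega \subseteq \omega' \Rightarrow S_{K,\omega'} < S_{K,\omega}$, and the same application of the preceding monotonicity lemma to the law of the nested pair $(S_{K,\omega_{p_2}}, S_{K,\omega_{p_1}})$. The only difference is cosmetic: you spell out the diagonal $G$-invariance of the coupling and the inclusion-reversal from the generating set of $S_{K,\omega}$, which the paper leaves implicit.
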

\begin{proof}
Let $K \in \Tree_G$ be random with law $\eta_n$. Let Leb denote Lebesgue measure on $[0,1]$ and let $x:X_K^{(2)} \to [0,1]$ be random with law $\textrm{Leb}^{X_K^{(2)}}$. In other words, for each cell $c \in X_K^{(2)}$, $x(c)$ has law Leb and the variables $\{x(c):~c \in X_K^{(2)}\}$ are independent.

 Fix $p,q$ with $0\le p \le q \le 1$. Let $\omega_p=x^{-1}( [0,p] )$ and $\omega_q = x^{-1}( [0,q] )$. Let $\rho$ be the law of the pair $(S_{K,\omega_q}, S_{K,\omega_p})$ (where $S_{K,\omega}$ is defined in \S \ref{sec:cover}). Clearly the projection of $\rho$ onto its first factor is $\Psi_*\eta_{n,q}$ and the projection onto its second factor is $\Psi_*\eta_{n,p}$. Because $\omega_p \subset \omega_q$, it follows that $S_{K,\omega_q} < S_{K,\omega_p}$. So the previous lemma implies $h_\mu(\Psi_*\eta_{n,q}) \ge h_\mu(\Psi_*\eta_{n,p}) $ as required.
 \end{proof}
 
\subsection{Entropies of boundaries of quotients by almost normal subgroups}


In this subsection, we prove Theorem \ref{thm:dense}. We need a few lemmas first.


\begin{lem}\label{lem:equi}
Let $\Omega_n$ be the set of all subsets of $\{a^1,\ldots, a^{n-1}, b^1,\ldots, b^{n-1}\}$ and $\Gamma_n=\langle a^n,b^n\rangle<G$. Let $\Gamma_n$ act on $\Omega_n^{\Gamma_n}$ by $(gx)(f)=x(g^{-1}f)$. Then there is a $\Gamma_n$-equivariant homeomorphism from $\Omega_n^{\Gamma_n}$ to the set $\{ (K_n,\omega)\in \Tree_G:~ \omega \subset X^{(2)}_{K_n}\}$. 
\end{lem}

\begin{proof}
First observe that for every 2-cell $\omega$ in $X_{K_n}^{(2)}$ there is a unique $g \in \Gamma_n$ and $i$ with $1\le i \le n-1$ such that $\omega$ is bounded by either the loop with vertex $K_n ga^i$ or the loop with vertex $K_ngb^i$.

Define $\Phi:\Omega_n^{\Gamma_n} \to \{ (K_n,\omega)\in \Tree_G:~ \omega \subset X^{(2)}_{K_n}\}$ by: $\Phi(x) = (K_n,\omega)$ where $\omega$ consists of all 2-cells whose boundary consists of a loop based at $K_nga^i$ or $K_ngb^j$ for $g \in \Gamma_n$, $1\le i,j \le n-1$ and $a_i,b_j \in x(g)$. It is routine to check that $\Phi$ is the required $\Gamma_n$-equivariant homeomorphism.
\end{proof}

\begin{lem}\label{lem:EMD}
Let $\cM_{\Gamma_n}(\Omega_n^{\Gamma_n})$ be the space of all $\Gamma_n$-invariant Borel probability measures on $\Omega_n^{\Gamma_n}$. Let $\cM^p_{\Gamma_n}(\Omega_n^{\Gamma_n})$ be the set of all those measures $\mu \in \cM_{\Gamma_n}(\Omega_n^{\Gamma_n})$ that are ergodic and have finite support. Then $\cM^p_{\Gamma_n}(\Omega_n^{\Gamma_n})$ is dense in 
$\cM_{\Gamma_n}(\Omega_n^{\Gamma_n})$.
\end{lem}

\begin{proof}
This is an immediate consequence of \cite[Theorem 1]{Ke12}, which states that free groups have property EMD (i.e., their profinite completions weakly contain all measure-preserving actions). The fact that property EMD implies the property of this lemma (that $\cM^p_{\Gamma_n}(\Omega_n^{\Gamma_n})$ is dense in 
$\cM_{\Gamma_n}(\Omega_n^{\Gamma_n})$) is an easy exercise contained in \cite[Proposition 3.5]{TD12}.  It had earlier been proven by the author in \cite[Theorem 3.4]{Bo03} that subset of measures $\mu$ that have finite support is dense in $\cM_{\Gamma_n}(\Omega_n^{\Gamma_n})$.
\end{proof}

\begin{lem}\label{lem:simplex}
Let $\cM^p_{\eta_n}(\overline{\Tree_G})$ be the set of all $G$-invariant Borel probability measures on $\overline{\Tree_G}$ which project to $\eta_n$, are ergodic and have finite support. Then $\cM^p_{\eta_n}(\overline{\Tree_G})$  is dense in $\cM_{\eta_n}(\overline{\Tree_G})$ 
\end{lem}

\begin{proof}
By Lemmas \ref{lem:equi} and \ref{lem:EMD}, the set $\cM^p_{\Gamma_n}(K_n)$ of ergodic $\Gamma_n$-invariant Borel probability measures on $\{ (K_n,\omega)\in \Tree_G:~ \omega \subset X^{(2)}_{K_n}\}$ which have finite support is dense in the space $\cM_{\Gamma_n}(K_n)$ of all $\Gamma_n$-invariant Borel probability measures on $\{ (K_n,\omega)\in \Tree_G:~ \omega \subset X^{(2)}_{K_n}\}$. The map $\Lambda: \cM_{\Gamma_n}(K_n) \to \cM_{\eta_n}(\overline{\Tree_G})$ defined by
$$\Lambda(\lambda) = \frac{1}{2n-1}\left(\lambda + \sum_{i=1}^{n-1} a^i_*\lambda + b^i_*\lambda\right)$$
is an affine isomorphism which maps $\Gamma_n$-ergodic measures to $G$-ergodic measures and measures with finite support to measures with finite support. It therefore maps $\cM^p_{\Gamma_n}(K_n)$ to $\cM^p_{\eta_n}(\overline{\Tree_G})$, proving that the latter is dense in $\cM_{\eta_n}(\overline{\Tree_G})$.
\end{proof}

As promised we can now prove that $\cM^e_{\eta_n}(\overline{\Tree_G})$ is pathwise connected. For this, recall that a convex closed metrizable subset $\cK$ of a locally convex linear
space is a simplex if each point in $\cK$ is the barycenter of a unique probability
measure supported on the subset $\partial_e \cK$ of extreme points of  $\cK$. In this case, $\cK$ is called a {\em Poulsen simplex} if $\partial_e \cK$ is dense in $\cK$. It is known from \cite{LOS78} that there is a unique Poulsen simplex up to affine isomorphism. Moreover, its set of extreme points is homeomorphic to $l^2$. The previous lemma implies:


\begin{cor}
For each $n \ge 1$, $\cM_{\eta_n}(\overline{\Tree_G})$ is a Poulsen simplex. Therefore, the subspace of ergodic measures $\cM^e_{\eta_n}(\overline{\Tree_G}) \subset \cM_{\eta_n}(\overline{\Tree_G})$ is homeomorphic to the Hilbert space $l^2$. In particular, $\cM^e_{\eta_n}(\overline{\Tree_G})$ is pathwise connected.
\end{cor}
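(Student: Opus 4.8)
The plan is to verify the three ingredients in the text's definition of a Poulsen simplex and then quote the structure theorem of \cite{LOS78}. Concretely, I must show that $\cM_{\eta_n}(\widetilde{\Tree_G})$ is a compact metrizable convex set, that it is a simplex in the sense of unique barycentric decomposition over its extreme points, and that its extreme boundary is dense; the $l^2$ and path-connectedness statements then follow formally.

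First I would transport the problem to a full shift using the affine isomorphism $\Lambda^{-1}\Phi$ constructed in Lemma \ref{lem:simplex}. That map identifies $\cM_{\eta_n}(\widetilde{\Tree_G})$ with $\cM_G(\Omega^G)$, the space of $G$-invariant Borel probability measures on the full shift over the finite alphabet $\Omega$. Since $G$ acts continuously on the compact metrizable space $\Omega^G$, the set $\cM_G(\Omega^G)$ is a nonempty, weak*-compact, metrizable convex subset of the locally convex space of signed Borel measures, and the ergodic decomposition theorem shows it is a Choquet simplex whose extreme points are exactly the ergodic invariant measures. Because $\Lambda^{-1}\Phi$ is an affine homeomorphism, the same holds for $\cM_{\eta_n}(\widetilde{\Tree_G})$: it is a metrizable simplex whose extreme boundary is $\cM^e_{\eta_n}(\widetilde{\Tree_G})$.

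Next I would pin down density of the extreme points, which is where the real content sits. By Lemma \ref{lem:simplex} the ergodic periodic measures $\cM^p_{\eta_n}(\widetilde{\Tree_G})$ are dense in $\cM_{\eta_n}(\widetilde{\Tree_G})$. Every such measure is ergodic, hence an extreme point, so $\cM^p_{\eta_n}(\widetilde{\Tree_G}) \subset \partial_e \cM_{\eta_n}(\widetilde{\Tree_G}) = \cM^e_{\eta_n}(\widetilde{\Tree_G})$, whence the extreme boundary is itself dense. Combining this with the previous paragraph, $\cM_{\eta_n}(\widetilde{\Tree_G})$ is a metrizable simplex with dense extreme boundary, i.e. a Poulsen simplex.

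Finally I would invoke \cite{LOS78}: the Poulsen simplex is unique up to affine homeomorphism and its extreme boundary is homeomorphic to $l^2$. This gives $\cM^e_{\eta_n}(\widetilde{\Tree_G}) \cong l^2$, and since $l^2$ is convex it is pathwise connected, which is the last assertion. I do not anticipate a genuine obstacle, since the heavy lifting (density of the ergodic periodic measures, and the shift model of Lemma \ref{lem:simplex}) is already in place and the remainder is bookkeeping plus the citation. The only point deserving a line of care is confirming that $\Lambda^{-1}\Phi$ is an affine \emph{homeomorphism} rather than merely an affine bijection, so that the topological attributes ``extreme'' and ``dense'' transfer correctly; this is clear because $\Phi$ is a homeomorphism onto its image and $\Lambda$, together with the explicit inverse recorded in Lemma \ref{lem:simplex}, is weak*-continuous.
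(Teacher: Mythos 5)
Your proof is correct and is precisely the argument the paper intends: the paper states the corollary follows "immediately" from the density lemma, and your write-up simply fills in the implicit steps (transport to $\cM_G(\Omega^G)$ via the affine homeomorphism $\Lambda^{-1}\Phi$, the ergodic decomposition giving the simplex structure with ergodic measures as extreme points, density of the ergodic periodic measures, and the citation of \cite{LOS78}). No gaps; this matches the paper's approach.
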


\begin{proof}[Proof of Theorem \ref{thm:dense}]
By Corollary \ref{cor:eta-n} and Lemma \ref{lem:simplex}, for every $n>0$, the set of all numbers $t$ such that $t = h_\mu( \nu_{\Psi_*\lambda})$ for some ergodic $\lambda \in \cM_{\eta_n}(\overline{\Tree_G})$ with finite support is dense in $[h_\mu(\eta_n), h_{max}(\mu)]$. If $\lambda \in \cM_{\eta_n}(\overline{\Tree_G})$ is ergodic with finite support then $\Psi_*\lambda$ is ergodic with finite support. Ergodicity implies $\Psi_*\lambda$ is supported on a single finite conjugacy class which implies, by invariance, that it is the uniform probability measure on a single conjugacy class. In other words, there is a conjugacy class $\{L_1,\ldots, L_n\} \subset \Sub_G$ such that $\Psi_*\lambda = \frac{1}{n} \sum_{i=1}^n \delta_{L_i}$. By Lemma \ref{lem:limits}, $\lim_{n\to\infty} h_\mu(\eta_n) = 0$. This implies the theorem.
\end{proof}

{\small

}

\end{document}